\documentclass[12pt,a4paper,draft]{amsart}
\usepackage[top=3cm,bottom=4cm,left=2.9cm,right=2.9cm,footskip=17mm]{geometry}
\usepackage{array,float,mathtools,verbatim}
\usepackage{amssymb,hyperref ,amsmath,amsthm,amsbsy,amscd, mathrsfs}

\numberwithin{equation}{section}
\numberwithin{figure}{section}
\usepackage[shortlabels]{enumitem}
\usepackage{color}
\usepackage[normalem]{ulem}

\linespread{1.1}
\setlength{\extrarowheight}{0.05cm} 

\newtheorem{theorem}{Theorem}[section]
\newtheorem{corollary}[theorem]{Corollary}
\newtheorem{lemma}[theorem]{Lemma}
\newtheorem{proposition}[theorem]{Proposition}

\newtheorem{definition}[theorem]{Definition}

\makeatletter
\newcommand{\imod}[1]{\allowbreak\mkern4mu({\operator@font mod}\,\,#1)}
\makeatother

\newcommand{\suchthat}{\;\ifnum\currentgrouptype=16 \middle\fi|\;}

\newtheorem{thmABC}{Theorem}

\newtheorem{corABC}[thmABC]{Corollary}

\DeclareMathOperator{\Ind}{Ind}
\DeclareMathOperator{\Res}{Res}
\DeclareMathOperator{\rea}{Re}
\DeclareMathOperator{\GL}{GL}
\DeclareMathOperator{\Spec}{Spec}

\def \bfa {{\bf a}}
\def \bfb {{\bf b}}
\def \bfy {{\bf y}}
\def \bfY {{\bf Y}}
\def \bfG {{\bf G}}
\def \bfs {{\bf s}}
\def \bfP {{\bf P}}
\def\wt{\widetilde}
\def\bb{\mathbb}
\def \Fq{\mathbb{F}_q}
\def\bf{\mathbf}
\def\cal{\mathcal}
\def\frak{\mathfrak} 

\def\CC{\bb{C}}
\def\ZZ{\bb{Z}}

\def\QQ{\bb{Q}}
\def\NN{\bb{N}}
\def\ZZ{\bb{Z}}

\def\RR{\bb{R}}
\def\T{\mathcal{T}}
\def\Irr{\widetilde{\mathrm{Irr}}} 
\def\diag{\mathrm{diag}} 
\def\Mat{\mathrm{Mat}} 
\def\rm{\textrm}
\def\R{\widetilde{R}}

\def\a{\alpha}
\def\aO{\a(\G(\O))}

\def\k{\kappa}
\def\i{\iota}
\def\g{\mathfrak{g}} 
\def\p{\mathfrak{p}} 

\def\o{\mathfrak{o}} 
\def\O{\mathcal{O}} 

\def\G{\mathbf{G}} 
\def\H{\mathbf{H}}
\def\Zpadic{
\int_{(x,{\bf y})\in\p\times (\o^d)^*} |x|^\tau_{\p}
\prod_{j=1}^u\frac{\|{F}_j({\bf y})\cup {F}_{j-1}({\bf y})x^2\|^\rho_{\p}}
{\|{F}_{j-1}({\bf y})\|_{\p}^\rho}
\prod_{i=1}^v\frac{\|{G}_i({\bf y})\cup {G}_{i-1}({\bf y})x\|_{\p}^\sigma}
{\|{G}_{i-1}({\bf y})\|_{\p}^\sigma}d\mu(x,{\bf y})
}

\def\ZIpadic{
\int_{(x,{\bf y})\in\p\times\GL_d(\o)} \prod_{\k\in[l]}\left\|
\bigcup_{\i\in I_\k}x^{e_{\k\i}}F_{\k\i}({\bf y})
\right\|_\p^{s_\k}d\mu(x,{\bf y})
}

\theoremstyle{definition}

\newtheorem{example}[theorem]{Example}
\newtheorem{remark}[theorem]{Remark}

\author{Duong H.\ Dung and Christopher Voll}
\address{Fakult\"{a}t f\"{u}r Mathematik\\
Universit\"{a}t Bielefeld\\
Postfach 100131\\
D-33501 Bielefeld, Germany.}
\email{dhoang@math.uni-bielefeld.de, voll@math.uni-bielefeld.de}


\subjclass[2010]{20F18, 20E18, 22E55, 20F69, 11M41}

\date{\today}


\keywords{Finitely generated nilpotent groups, representation zeta
  functions, Kirillov orbit method, $p$-adic integrals}

\begin{document}
\title[Representation zeta functions of nilpotent groups]{Uniform
  analytic properties of representation zeta functions of finitely
  generated nilpotent groups}


\begin{abstract}
  Let $G$ be a finitely generated nilpotent group. The representation
  zeta function $\zeta_G(s)$ of $G$ enumerates twist isoclasses of
  finite-dimensional irreducible complex representations of $G$. We
  prove that $\zeta_G(s)$ has rational abscissa of convergence $\a(G)$
  and may be meromorphically continued to the left of $\a(G)$ and
  that, on the line $\{s\in\CC \mid \rea(s) = \a(G)\}$, the continued
  function is holomorphic except for a pole at~$s=\a(G)$.  A Tauberian
  theorem yields a precise asymptotic result on the representation
  growth of $G$ in terms of the position and order of this pole.
  
  We obtain these results as a consequence of a result
  establishing uniform analytic properties of representation zeta
  functions of torsion-free finitely generated nilpotent groups of the
  form $\G(\O)$, where $\G$ is a unipotent group scheme defined in
  terms of a nilpotent Lie lattice over the ring $\O$ of integers of a
  number field. This allows us to show, in particular, that the
  abscissae of convergence of the representation zeta functions of
  such groups and their pole orders are invariants of~$\G$,
  independent of~$\O$.
\end{abstract}

\maketitle
\thispagestyle{empty}
\section{Introduction}
\subsection{Main results}

In this paper we study zeta functions associated to finitely generated
nilpotent groups. Let $G$ be such a group and let $\rho$, $\sigma$ be
complex representations of $G$. We say that $\rho$ and $\sigma$ are
\emph{twist-equivalent} if there exists a $1$-dimensional
representation $\chi$ of $G$ such that
$\rho\otimes\chi\cong\sigma$. Twist-equivalence is an equivalence
relation on the set of finite-dimensional irreducible complex
representations of $G$. Its classes are called \emph{twist
  isoclasses}. The number $\widetilde{r}_n(G)$ of twist isoclasses of
$G$ of dimension $n$ is finite for every $n\in\NN$;
cf.\ {\cite[Theorem 6.6]{Lubotzky-Magid}}.

The (\textit{representation}) \emph{zeta function} of $G$ is defined to be the
Dirichlet generating function
$$\zeta_G(s):=\sum_{n=1}^\infty{\widetilde{r}_n(G)}{n^{-s}},$$ where
$s$ is a complex variable. The sequence
$\left(\widetilde{r}_n(G)\right)_{n\in\NN}$ grows polynomially and
hence $\zeta_G(s)$ converges on a complex half-plane $\{s\in\bb{C}
\mid \rea(s)>\alpha\}$ for some $\alpha\in\bb{R}$;
cf.\ {\cite[Lemma~2.1]{SV/14}. The infimum of such $\alpha$ is the
  \emph{abscissa of convergence} $\alpha(G)$ of~$\zeta_G(s)$. Note
  that if $\wt{r}_n(G)\neq 0$ for infinitely many $n\in\NN$, then
$$\a(G)=\limsup_{N\rightarrow\infty}\frac{\log\sum_{n=1}^N\widetilde{r}_n(G)}{\log
    N}.$$ Hence $\a(G)$ determines the rate of polynomial growth
  of~$\left(\sum_{n=1}^N\widetilde{r}_n(G)\right)_{N\in\NN}$.

  We consider a class of torsion-free finitely generated nilpotent
  groups (or $\cal{T}$-groups, for short) obtained from unipotent
  group schemes over number rings. Let $\O_K$ be the ring of integers
  of a number field $K$ and $\Lambda$ be an $\O_K$-Lie lattice of
  nilpotency class $c$ and $\O_K$-rank $h$.
By an $\O_K$-Lie lattice we mean a free and finitely generated
$\O_K$-module, together with an antisymmetric, bi-additive form
$[\,,\,]$ satisfying the Jacobi identity. If $c>2$ we assume that
$\Lambda':=[\Lambda,\Lambda]\subseteq c!\Lambda$. This ensures that
the Hausdorff series may be used to associate to $\Lambda$ a unipotent
group scheme $\G_\Lambda$; see \cite[Section~2.1.2]{SV/14}. If $c=2$,
then the group scheme $\G_\Lambda$ may be defined more directly and
without the above assumption on $\Lambda$; cf.\
\cite[Section~2.4]{SV/14}}. For every number field $L$ containing~$K$,
with ring of integers $\O_L$, the group $\G_\Lambda(\O_L)$ is a
$\cal{T}$-group of nilpotency class $c$ and Hirsch length~$h \cdot
[L:\QQ]$.  The main result of this paper is the following.
Given arithmetic functions $f(N)$ and $g(N)$, we write `$f(N)\sim
g(N)$ as $N\rightarrow\infty$' to indicate that $\lim_{N\rightarrow
  \infty}f(N)/g(N)=1$.
\begin{thmABC}\label{thmA}
 Let $\Lambda$ and $\G=\G_\Lambda$ be as above. Then there exist
 $a(\G), \delta(\G) \in \QQ_{>0}$, and $\beta(\G)\in\NN$ such that,
 for all finite extensions $L/K$, the following hold.
 \begin{enumerate}
  \item The abscissa of convergence of $\zeta_{\G(\O_L)}(s)$ is
    $a(\G)$, i.e.\ $a(\G)=\a(\G(\O_L))$.
  \item The zeta function $\zeta_{\G(\O_L)}(s)$ has meromorphic
    continuation to the half-plane $\{s \in \CC \mid
    \rea(s)>a(\G)-\delta(\G)\}$. On the line $\{s\in\CC \mid \rea(s) =
    a(\G)\}$ the continued zeta function is holomorphic except for a
    pole at $s=a(\G)$, of order~$\beta(\G)$.
 \end{enumerate}
Moreover, there exists $c(\G(\O_L))\in\RR_{>0}$ such that
\begin{equation*}
\sum_{n=1}^N\widetilde{r}_n(\G(\O_L))\sim c(\G(\O_L)) N^{a(\G)}
(\log N)^{\beta(\G)-1}~\text{as $N\to\infty$}.
\end{equation*}
\end{thmABC}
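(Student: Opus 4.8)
The plan is to express $\zeta_{\G(\O_L)}(s)$ as an Euler product of $\mathfrak p$-adic integrals over the non-archimedean places of $L$, to factor out of it an explicit finite product of Dedekind zeta functions of $L$ carrying all the singularities in the relevant region, and then to pass from this analytic information to the counting asymptotics by a Tauberian theorem. By the Kirillov orbit method as set up in \cite[Section~2]{SV/14} (using the hypothesis $\Lambda'\subseteq c!\Lambda$ when $c>2$) one has, for every finite extension $L/K$, the factorisation $\zeta_{\G(\O_L)}(s)=\prod_{\mathfrak p\in\Spec(\O_L)}\zeta_{\G(\O_{L,\mathfrak p})}(s)$, in which each local factor is a $\mathfrak p$-adic integral of the kind studied in \cite{SV/14}. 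Denef's rationality theory, in the form that produces a single formula valid simultaneously at all places, then provides a rational function $W_\G\in\QQ(X,Y)$ depending only on $\Lambda$ — hence only on $\G$ — such that $\zeta_{\G(\O_{L,\mathfrak p})}(s)=W_\G(q_{\mathfrak p},q_{\mathfrak p}^{-s})$ for every $\mathfrak p$, where $q_{\mathfrak p}=|\O_L/\mathfrak p|$; since local factors tend to $1$ as $\rea(s)\to\infty$ we have $W_\G(X,0)=1$. (If finitely many places required separate treatment, their factors, being rational in $q_{\mathfrak p}^{-s}$, would not affect the analysis.)

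Clearing denominators, write $W_\G(X,Y)=P(X,Y)\big/\prod_{i\in I}(1-X^{a_i}Y^{b_i})$ with $P\in\ZZ[X,Y]$, $P(X,0)=1$ and $b_i\in\ZZ_{>0}$. Because $\prod_{\mathfrak p}(1-q_{\mathfrak p}^{a_i-b_is})^{-1}$ is the Dedekind zeta function $\zeta_L(b_is-a_i)$, I apply the Euler-product factorisation lemma of du Sautoy and Grunewald: there exist a finite multiset $I_0$ of pairs $(a_i,b_i)$ and a rational function $W_2\in\QQ(X,Y)$ with $W_2(X,0)=1$ such that $W_\G=\prod_{i\in I_0}(1-X^{a_i}Y^{b_i})^{-1}W_2$, such that every $i\in I_0$ attains $a(\G):=\max_{i\in I_0}(a_i+1)/b_i$, and such that the corrected Euler product $Z_L(s):=\prod_{\mathfrak p}W_2(q_{\mathfrak p},q_{\mathfrak p}^{-s})$ converges absolutely and locally uniformly — hence is holomorphic — on a half-plane $\{\rea(s)>a(\G)-\delta(\G)\}$ for some $\delta(\G)\in\QQ_{>0}$ smaller than the distance from $a(\G)$ to the next candidate singularity. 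Setting $\beta(\G):=|I_0|$, the three numbers $a(\G)\in\QQ_{>0}$, $\delta(\G)\in\QQ_{>0}$, $\beta(\G)\in\NN$ are read off from $W_\G$ alone and are therefore invariants of $\G$, independent of $L$. Producing a $\delta(\G)>0$ and a region of convergence for $Z_L$ that work simultaneously for all $L$ requires degree and growth estimates on the numerator $P$ of $W_\G$, which in turn rest on the explicit shape of the defining $\mathfrak p$-adic integrals (equivalently, on the combinatorial ``flag'' formulas of \cite{SV/14}); this uniform analytic control is the technical heart of the argument and the step I expect to be the main obstacle.

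From the resulting factorisation $\zeta_{\G(\O_L)}(s)=\big(\prod_{i\in I_0}\zeta_L(b_is-a_i)\big)Z_L(s)$ the analytic statements follow. Each $\zeta_L(b_is-a_i)$ is meromorphic on $\CC$ with a single simple pole, at $s=(a_i+1)/b_i=a(\G)$, and is holomorphic and non-zero elsewhere on the line $\{\rea(s)=a(\G)\}$ by the classical non-vanishing of Dedekind zeta functions on $\rea(w)=1$; combined with the holomorphy of $Z_L$ on $\{\rea(s)>a(\G)-\delta(\G)\}$ this gives the meromorphic continuation asserted in part~(2) and shows that the only possible singularity on $\{\rea(s)=a(\G)\}$ is at $s=a(\G)$. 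The order of that pole is exactly $\beta(\G)=|I_0|$ provided $Z_L(a(\G))\neq 0$; this holds because at the real point $s=a(\G)$ each local factor equals $\sum_n\widetilde r_n(\G(\O_{L,\mathfrak p}))\,n^{-a(\G)}\ge 1$ (as $a(\G)$ exceeds every local abscissa of convergence) and each $1-q_{\mathfrak p}^{a_i-b_ia(\G)}$ lies in $(0,1)$, so the absolutely convergent product $Z_L(a(\G))$ is a product of positive reals and is strictly positive. Finally, since the coefficients $\widetilde r_n(\G(\O_L))$ are non-negative, Landau's theorem forces $s=a(\G)$ to be a genuine singularity of $\zeta_{\G(\O_L)}(s)$; hence $a(\G)=\a(\G(\O_L))$ and $\beta(\G)\ge 1$, proving parts~(1) and~(2).

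For the asymptotic formula, $\zeta_{\G(\O_L)}(s)$ now meets the hypotheses of a Tauberian theorem of Delange type — non-negative coefficients, finite abscissa $a(\G)>0$, meromorphic continuation just past $\rea(s)=a(\G)$ with a unique pole on that line, of order $\beta(\G)$, and no further boundary obstruction — in the form used by du Sautoy and Grunewald. It yields $\sum_{n=1}^N\widetilde r_n(\G(\O_L))\sim c(\G(\O_L))\,N^{a(\G)}(\log N)^{\beta(\G)-1}$ as $N\to\infty$, with $c(\G(\O_L))=\frac{Z_L(a(\G))}{a(\G)\,(\beta(\G)-1)!}\prod_{i\in I_0}\frac{\kappa_L}{b_i}>0$, where $\kappa_L$ is the residue of $\zeta_L(s)$ at $s=1$; in particular $c(\G(\O_L))\in\RR_{>0}$, so $c$ depends on $L$ (through $\kappa_L$ and $Z_L(a(\G))$) although $a(\G)$ and $\beta(\G)$ do not. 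In sum, the orbit-method factorisation, the extraction of the Dedekind zeta factors, the positivity arguments and the Tauberian step are routine; the genuine work is the uniform-in-$L$ holomorphy and convergence region for the corrected Euler product $Z_L(s)$, which cannot come from rationality alone but only from the explicit integrals underlying $W_\G$.
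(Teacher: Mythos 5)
Your proposal founders at the very first substantive step: the claim that there is a single bivariate rational function $W_\G\in\QQ(X,Y)$, depending only on $\G$, with $\zeta_{\G(\O_{L,\p})}(s)=W_\G(q_\p,q_\p^{-s})$ at every place. This kind of absolute uniformity is not available and is not what Denef-type rationality gives. What one actually obtains (via the Kirillov orbit method, the associated $\p$-adic integrals, and a principalization of the relevant ideal) is, for almost all $\p$, an expression of the form $\sum_{i} \vert \overline{V}_i(\o/\p)\vert\, W_i(q,q^{-s})$, where the coefficients $\vert \overline{V}_i(\o/\p)\vert$ are numbers of $\FF_q$-rational points of reductions of globally defined varieties. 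These point counts depend on the prime ideal $\p$ itself --- through the splitting behaviour of the irreducible components modulo $\p$ --- and not merely on the residue cardinality $q$. Consequently your ``clearing denominators and extracting Dedekind zeta functions'' step does not get off the ground: the Euler products that must be extracted are not $\prod_\p(1-q_\p^{a-bs})^{-1}=\zeta_L(bs-a)$ but products of the form $\prod_\p(1-l_\p(F)\,q_\p^{a-bs})^{-1}$, where $l_\p(F)$ counts the absolutely irreducible components of $F$ modulo $\p$. Controlling these requires the Lang--Weil estimate to approximate the point counts by $l_\p(F)q^{d}$, and then translates of \emph{Artin $L$-functions} (not Dedekind zeta functions) to continue the approximating products; this is the adaptation of du Sautoy--Grunewald that constitutes the core of the argument, and it is also the reason the pole at $s=a(\G)$ is the unique one on the critical line. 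Your own closing remark correctly senses that ``uniform analytic control'' is the hard part, but you locate the difficulty in degree bounds on the numerator of a $W_\G$ that does not exist; the real difficulty is the prime-by-prime arithmetic variation of the coefficients.

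A second, smaller gap: you dismiss the finitely many exceptional places with ``their factors, being rational in $q_\p^{-s}$, would not affect the analysis.'' Rationality in $q^{-s}$ alone does not control where the poles of these factors lie, and a single exceptional factor with a pole at or to the right of $a(\G)$ would destroy both the abscissa computation and the holomorphy claim on the line $\{\rea(s)=a(\G)\}$. One must actually prove that the abscissae of convergence of \emph{all} local factors --- including those at primes of bad reduction, primes dividing the index of the chosen global basis lattice, and the case $(p,c)=(2,3)$ where the Kirillov orbit method fails for $\G(\O_\p)$ itself --- lie in a fixed finite set of rationals strictly below $a(\G)$. This requires passing to finite-index uniform subgroups, a commensurability-invariance lemma for local abscissae, and an analysis of the ``deformed'' integrals at bad primes. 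The remainder of your outline (Landau's theorem to force a genuine singularity at $a(\G)$, positivity of the corrected Euler product at $s=a(\G)$, and the Delange-type Tauberian theorem) matches the paper's endgame and would be fine once the continuation is correctly established.
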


Theorem~\ref{thmA} asserts that the invariants $a(\G)$ and $\beta(\G)$
are independent of $L$, whereas the number $c(\G(\O_L))$ may vary
with~$L$. Theorem~\ref{thmA} answers \cite[Question~1.4]{SV/14}
affirmatively.

\begin{example}\label{exa:heisenberg}
  We illustrate Theorem~\ref{thmA} in the case of the Heisenberg group
  scheme ${\bf H}$ associated to the Heisenberg $\ZZ$-Lie lattice of
  strict upper-triangular $3\times 3$-matrices. Let $\O$ be the ring
  of integers of a number field $L$. The $\cal{T}$-group ${\bf H}(\O)$
  is isomorphic to the group of upper-unitriangular
  $3\!\times\!3$-matrices over $\O$, of nilpotency class $2$ and
  Hirsch length $3[L:\QQ]$.  The zeta function of ${\bf H}(\O)$ is
  \begin{equation}\label{equ:heisenberg}
    \zeta_{{\bf
        H}(\O)}(s)=\frac{\zeta_L(s-1)}{\zeta_L(s)}=\prod_{\p\in\Spec(\O)}\frac{1-|\O/\p|^{-s}}{1-|\O/\p|^{1-s}},
\end{equation}
where $\zeta_L(s)$ is the Dedekind zeta function of~$L$ and $\p$
ranges over the nonzero prime ideals of~$\O$. This is proved in
{\cite{Magid89}} for $L=\bb{Q}$, in \cite{Ezzat} for quadratic number
fields, and in \cite[Theorem~B]{SV/14} for arbitrary number fields.
The zeta function $\zeta_{{\bf H}(\O)}(s)$ has abscissa of convergence
$a({\bf H})=2$ and may be meromorphically continued to the whole
complex plane.  The continued function has no singularities on the
line $\{s\in\bb{C}\mid \rea(s)=2\}$, apart from a simple pole at
$s=2$.  Formula~\eqref{equ:heisenberg} implies that
$$\sum_{n=1}^N\widetilde{r}_n({\bf H}(\O))\sim
\frac{1}{2\zeta_L(2)}N^2\textrm{ as $N\rightarrow \infty$}.$$ We
remark that $\zeta_L(2)$ depends subtly on~$L$;
cf.\ {\cite[Theorem~1]{Zagier}}.
\end{example}

\begin{remark}
 The part of statement (2) establishing the uniqueness of the pole at
 $s=a(\G)$ on the line $\{s\in\CC \mid \rea(s) = a(\G)\}$ is owed to
 the fact that Artin $L$-functions are used to establish the
 meromorphic continuation. Whether there are other poles in the domain
 $\{s \in \CC \mid \rea(s) > a(\G) - \delta(\G)\}$ is an interesting
 open question.
\end{remark}

Theorem~\ref{thmA} yields uniform statements for families of
$\T$-groups of a specific form, viz.\ those coming from nilpotent Lie
lattices. In the following corollary we apply Theorem~\ref{thmA} and
the methods underlying its proof to finitely generated
nilpotent groups at large.

\begin{corABC}\label{cor}
 Let $G$ be a finitely generated nilpotent group.
 \begin{enumerate}
  \item The abscissa of convergence $\alpha(G)$ of $\zeta_G(s)$ is a
    rational number.
  \item The zeta function $\zeta_G(s)$ has a meromorphic continuation
    to $\{s \in \CC \mid \rea(s)>\a(G)-\delta\}$ for some
    $\delta\in\QQ_{>0}$. On the line $\{s\in\CC \mid \rea(s) =
    \a(G)\}$, the continued zeta function has a unique pole at
    $s=\a(G)$, of order~$\beta(G)$, say.
  \item There exists $c(G)\in\RR_{>0}$ such that
$$\sum_{n=1}^N\widetilde{r}_n(G)\sim c(G) N^{\a(G)} (\log
    N)^{\beta(G)-1}~\text{as $N\to\infty$}.$$
 \end{enumerate}
Moreover, the abscissa of convergence $\a(G)$ and the pole order
$\beta(G)$ of the continued function are commensurability invariants:
if $H\leq G$ is a subgroup of $G$ of finite index in $G$, then
$\alpha(G)=\alpha(H)$ and~$\beta(G) = \beta(H)$.
\end{corABC}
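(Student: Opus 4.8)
The plan is to deduce the corollary from Theorem~\ref{thmA} by locating inside $G$ a finite-index subgroup of the form $\G_\Lambda(\ZZ)$ and transferring the analytic information along that inclusion. By a standard fact about finitely generated nilpotent groups, $G$ has a torsion-free subgroup $G_0$ of finite index (a $\T$-group). By Mal'cev theory, $\log(G_0)$ is a full $\ZZ$-lattice in the nilpotent $\QQ$-Lie algebra $\g:=\QQ\otimes_\ZZ G_0$, closed under the Hausdorff group law but perhaps not under the Lie bracket; replacing it by $m\log(G_0)$ for a suitable $m\in\NN$ yields a $\ZZ$-Lie lattice $\Lambda$ of the same rank and nilpotency class as $G_0$, satisfying the standing hypothesis $[\Lambda,\Lambda]\subseteq c!\Lambda$, and with $\G_\Lambda(\ZZ)$ isomorphic to a finite-index subgroup of $G$. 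Theorem~\ref{thmA}, applied with $K=L=\QQ$, then supplies the invariants $a(\G_\Lambda),\delta(\G_\Lambda)\in\QQ_{>0}$ and $\beta(\G_\Lambda)\in\NN$ governing $\zeta_{\G_\Lambda(\ZZ)}(s)$.

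Next I would pass from $\G_\Lambda(\ZZ)$ to $G$ using the Euler factorisation. The sequence $(\wt r_n(G))_n$ is multiplicative, and for each prime $q$ the local factor $\zeta_{G,q}(s):=\sum_k\wt r_{q^k}(G)q^{-ks}$ is the twist representation zeta function of the pro-$q$ completion $\widehat G_q$; if $q\nmid[G:\G_\Lambda(\ZZ)]$ then $\widehat G_q\cong\widehat{\G_\Lambda(\ZZ)}_q$, so $\zeta_{G,q}$ equals the local factor of $\zeta_{\G_\Lambda(\ZZ)}$ at $q$. Let $S$ be the finite set of primes dividing $[G:\G_\Lambda(\ZZ)]$ together with the finitely many primes exceptional for $\G_\Lambda$; then
\[
\zeta_G(s)=\Bigl(\prod_{q\in S}\zeta_{G,q}(s)\Bigr)\cdot\Bigl(\prod_{q\notin S}\zeta_{\G_\Lambda(\ZZ_q)}(s)\Bigr),
\]
where for $q\notin S$ the factor $\zeta_{\G_\Lambda(\ZZ_q)}(s)$ is the uniform rational function in $q^{-s}$ furnished by the Kirillov orbit method. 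The second product is thus of the type analysed in the proof of Theorem~\ref{thmA}; enlarging the finite exceptional set from that of $\G_\Lambda$ to $S$ leaves the Artin $L$-functions --- hence the pole at $s=a(\G_\Lambda)$ and its order $\beta(\G_\Lambda)$ --- unchanged, and changes the absolutely convergent Euler correction only by a factor holomorphic and non-vanishing near the line $\rea(s)=a(\G_\Lambda)$; so the second product is meromorphic on $\{\rea(s)>a(\G_\Lambda)-\delta\}$ and holomorphic on $\{\rea(s)=a(\G_\Lambda)\}$ apart from a pole of order $\beta(\G_\Lambda)$ at $s=a(\G_\Lambda)$. Each factor $\zeta_{G,q}$ with $q\in S$ is a rational function of $q^{-s}$ with non-negative Dirichlet coefficients and constant term $1$; as $\widehat G_q$ contains $\widehat{\G_\Lambda(\ZZ)}_q$ with finite index, and every local factor of $\zeta_{\G_\Lambda(\ZZ)}$ has abscissa of convergence strictly below $a(\G_\Lambda)$, a Clifford-theory comparison shows the same for $\zeta_{G,q}$; hence $\prod_{q\in S}\zeta_{G,q}$ is holomorphic on a half-plane $\{\rea(s)>a(\G_\Lambda)-\varepsilon\}$ with strictly positive value at $s=a(\G_\Lambda)$. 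Multiplying, $\zeta_G(s)$ extends meromorphically to $\{\rea(s)>a(\G_\Lambda)-\delta'\}$ for some $\delta'\in\QQ_{>0}$, is holomorphic for $\rea(s)>a(\G_\Lambda)$, and on the line $\rea(s)=a(\G_\Lambda)$ is holomorphic except for a pole of order exactly $\beta(\G_\Lambda)$ at $s=a(\G_\Lambda)$; since $\zeta_G$ has non-negative coefficients, Landau's theorem gives $\a(G)=a(\G_\Lambda)\in\QQ_{>0}$. This proves (1) and (2) with $\beta(G):=\beta(\G_\Lambda)$.

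Part (3) then follows from (1), (2) and a Tauberian theorem, as in the proof of Theorem~\ref{thmA}, with some $c(G)\in\RR_{>0}$. For the commensurability invariance, given a finite-index subgroup $H\le G$, applying the first paragraph to $H$ yields a $\ZZ$-Lie lattice $\Lambda'$ with $\G_{\Lambda'}(\ZZ)$ of finite index in $H$, hence in $G$; running the second paragraph for $G$ and for $H$, each time with the subgroup $\G_{\Lambda'}(\ZZ)$, gives $\a(G)=a(\G_{\Lambda'})=\a(H)$ and $\beta(G)=\beta(\G_{\Lambda'})=\beta(H)$. (Clifford theory with respect to a normal finite-index subgroup also gives directly a two-sided comparison $\sum_{n\le N}\wt r_n(G)\asymp\sum_{n\le c_0N}\wt r_n(H)$ for a constant $c_0>0$, which forces $\a(G)=\a(H)$ and, combined with (3), $\beta(G)=\beta(H)$.)

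The step I expect to require most care is the transfer in the second paragraph: one must ensure that the finitely many exceptional local factors $\zeta_{G,q}$ ($q\in S$) create no new singularities on the line $\rea(s)=a(\G_\Lambda)$ and do not alter the pole order there. This comes down to the fact --- for $\G_\Lambda(\ZZ)$, part of the structure of these cone-integral rational functions established in the proof of Theorem~\ref{thmA}, and inherited by $\widehat G_q$ since it contains $\widehat{\G_\Lambda(\ZZ)}_q$ with finite index --- that a single-prime twist representation zeta function of such a group has abscissa of convergence strictly below the global one, together with the bookkeeping showing that deleting finitely many good Euler factors from the product in the proof of Theorem~\ref{thmA} leaves the relevant Artin $L$-functions, and hence the pole at $s=a(\G_\Lambda)$ and its order, intact. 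The remaining ingredients --- virtual torsion-freeness, Mal'cev theory, multiplicativity of $(\wt r_n(G))_n$ and its interpretation via pro-$q$ completions, rationality of single-prime zeta functions, and the Tauberian theorem --- are standard or already available from the proof of Theorem~\ref{thmA}.
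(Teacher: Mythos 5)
Your proposal is correct and follows essentially the same route as the paper: pass to a torsion-free finite-index subgroup, use the Mal'cev correspondence (via \cite[Theorem~4.1]{GSS88}) to find inside it a finite-index subgroup of the form $\G_\Lambda(\ZZ)$, apply Theorem~\ref{thmA} with $K=L=\QQ$, and control the finitely many discrepant Euler factors using their rationality in $p^{-s}$, nonnegative coefficients, and the fact (Lemma~\ref{abscissa commensurable} together with Proposition~\ref{local poles}) that each local abscissa is strictly below the global one. The only cosmetic difference is that the paper obtains the Lie lattice as $\log(H)$ for a suitable finite-index subgroup $H$ rather than as a rescaling of $\log(G_0)$, but this is the same standard fact.
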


\begin{remark}
The February 2015 version of \cite{Hrushovski-Martin} establishes part
(1) (but not parts (2) and (3)) of Corollary~\ref{cor} as an
application of model-theoretic results on zeta functions associated to
definable equivalence relations. We discuss this approach in some
detail in Section~\ref{subsubsec:rep.zeta.T-groups}.
\end{remark}

\subsection{Methodology}
To prove Theorem~\ref{thmA}, we present $\zeta_{\G(\O_L)}(s)$ as a
suitable Euler product and control some analytic properties of this
product. The simplest such Euler product holds for general
finitely generated nilpotent groups and is indexed by the rational
primes. Let $G$ be a finitely generated nilpotent group.  Then
\begin{equation}\label{equ:coarse.euler}
  \zeta_G(s)=\prod_{\rm{$p$ prime}}\zeta_{G,p}(s),
\end{equation}
where, for a prime $p$, the Euler factor
$\zeta_{G,p}(s):=\sum_{i=0}^\infty\widetilde{r}_{p^i}(G)p^{-is}$
enumerates the twist isoclasses of irreducible complex representations
of $G$ of $p$-power dimension.  It may also be viewed as the zeta
function of the pro-$p$ completion $\widehat{G}^p$ of $G$, enumerating
\emph{continuous} irreducible complex representations of
$\widehat{G}^p$ up to twists by continuous one-dimensional such
representations. Each Euler factor is a rational function in~$p^{-s}$;
cf.~{\cite[Theorem~1.5]{Hrushovski-Martin}}.

In the situation of Theorem~\ref{thmA}, these features may be refined
in the following sense.  Let $\Lambda$ and $\G=\G_\Lambda$ be as in
the theorem. Fix a finite extension $L$ of $K$ and write $\O = \O_L$
for its ring of integers.  For a nonzero prime ideal $\p$ of $\O$ we
denote by $\O_\p$ the completion of $\O$ at~$\p$, with residue field
cardinality $q$ and residue field characteristic~$p$.  The zeta
function $\zeta_{\G(\O)}(s)$ is the Euler product
\begin{equation}\label{equ:fine.euler}
  \zeta_{\G(\O)}(s)=\prod_{\p\in\Spec(\O)}\zeta_{\G(\O_\p)}(s)
\end{equation}
ranging over all nonzero prime ideals $\p$ of $\O$, where
$\zeta_{\G(\O_\p)}(s)$ enumerates the \emph{continuous} irreducible
complex representations of the finitely generated nilpotent pro-$p$
group $\G(\O_\p)$ up to twists by continuous one-dimensional
representations; cf.\ \cite[Proposition~2.2]{SV/14}.  Clearly, the
Euler product~\eqref{equ:fine.euler}
refines~\eqref{equ:coarse.euler}. For almost all (i.e.\ all but
finitely many) $\p$, the local zeta function $\zeta_{\G(\O_\p)}(s)$ is
a rational function in $q^{-s}$ and satisfies a functional equation
upon inversion of $q$; see \cite[Theorem A]{SV/14}. Key to this
general result is a presentation of almost all of the Euler factors
$\zeta_{\G(\O_\p)}(s)$ by \emph{formulae of Denef type}, i.e.\ finite
sums of the form
\begin{equation}\label{equ:denef.type}
\sum_{i=1}^r \vert \overline{V}_i(\o/\p) \vert W_i(q,q^{-s}).
\end{equation}
Here, the terms $\vert \overline{V}_i(\o/\p) \vert$ denote the numbers
of $\o/\p \cong \mathbb{F}_q$-rational points of the reductions modulo
$\p$ of suitable algebraic varieties $V_i$ defined over~$\O_K$ and
$W_i(X,Y)\in\QQ(X,Y)$ are bivariate rational functions enumerating
integral points in polyhedral fans. Expressions such as
\eqref{equ:denef.type} always allow for a common denominator which is
a product of factors of the form $1-q^{-as-b}$, for suitable integers
$a,b$. Expressing Euler factors through formulae of the form
\eqref{equ:denef.type} shows that they are \emph{uniformly rational}
in $q^{-s}$ in the sense of \cite[Section~6]{Hrushovski-Martin}.

Our proof of Theorem~\ref{thmA} proceeds by analysing Euler products
of formulae of Denef type arising in the specific context of
representation zeta functions of finitely generated nilpotent groups.
Here, such formulae are facilitated by the Kirillov orbit method,
which allows for a parametrization of twist isoclasses of
finite-dimensional representations by co-adjoint orbits.  Roughly
speaking, we show that the Euler products of formulae of Denef type
representing almost all factors in the Euler
product~\eqref{equ:fine.euler} can be approximated by products of
translates of the Artin $L$-functions to continue the relevant Euler
product to the left of its abscissa of convergence. This analysis is
motivated by and uses tools from \cite{duSG/00}, but extra care has to
be taken to adapt this analysis to the specific kind of Denef formulae
arising and to render it uniform over all extensions $L$ of~$K$. We
adapt methods from \cite{Voll-compact-p-adic-analytic-arithmetic} to
show that the finitely many Euler factors not covered by our generic
arguments do not affect the relevant analytic properties of the
cofinite Euler product. A Tauberian theorem allows for the asymptotic
final statement.

The methods we employ are inspired by but different from both those
used to study the analytic properties of Euler products of $\p$-adic
\emph{cone integrals} arising in the theory of subgroup growth of
$\T$-groups and those used to approximate Euler products arising in
the representation growth of arithmetic subgroups of semisimple
groups. We discuss these classes of $\p$-adic integrals in some detail
in Sections~\ref{subsec:subgroup} and~\ref{subsec:semisimple}.

Corollary~\ref{cor}, which pertains to general finitely generated
nilpotent groups, is obtained by applying Theorem~\ref{thmA} to a
subgroup of finite index which fits into its remit. The existence of
such subgroups follows from the Mal'cev correspondence.

\subsection{Background and related research} 

\subsubsection{Representation zeta functions of nilpotent groups}\label{subsubsec:rep.zeta.T-groups}
The conclusions of Theorem~\ref{thmA} and Corollary~\ref{cor} were
previously observed in special cases.  In \cite[Theorem~B]{SV/14}, for
instance, three infinite families of group schemes generalizing the
Heisenberg group scheme (see Example~\ref{exa:heisenberg}) were
studied. The explicit formulae given there for the zeta functions of
the members of these families in terms of Dedekind zeta functions show
that the respective abscissae of convergence are integers and
independent of the number field. The respective constants in the
asymptotic expansions, however, are of an arithmetic nature, being
expressible in terms of special values of Dedekind zeta functions; see
\cite[Corollary~1.3]{SV/14}. Further examples of representation zeta
functions of $\T$-groups, illustrating Theorem~\ref{thmA} and
Corollary~\ref{cor}, were computed in \cite{Ezzat/12, Snocken}. It is
not hard to show that every positive rational number is the abscissa
of convergence of some $\cal{T}$-group of nilpotency class~$2$; see
\cite[Theorem~4.22]{Snocken}. It is an interesting open problem to
characterize analytic data such as the invariants $a(\G)$,
$\delta(\G)$, and $\beta(\G)$ in Theorem~\ref{thmA} in terms of
structural invariants of the group scheme. Note that
Theorem~\ref{thmA} makes no claim as to the maximal value of
$\delta(\G)$. In the examples discussed, $\delta(\G)$ can be chosen
arbitrarily large. We expect it to be easy, however, to exhibit
examples of $\T$-groups whose representation zeta functions do not
have meromorphic continuation to the whole plane, but where this
continuation has a natural boundary.

We already mentioned the fundamental contributions made in
\cite{Hrushovski-Martin}. This paper -- which developed substantially
between its first posting in 2006 and its latest revision in 2015,
acquiring two additional authors on the way -- studies the more
general setup of zeta functions enumerating classes of definable
equivalence relations, a model-theoretic notion. This class of zeta
functions contains representation zeta functions of finitely generated
nilpotent groups. Already in its first version, by Hrushovski and
Martin, \cite{Hrushovski-Martin} established the rationality in
$p^{-s}$ of \emph{all} factors in the coarse Euler
product~\eqref{equ:coarse.euler}. The 2015-version established
formulae for \emph{almost all} of the fine Euler factors in
\eqref{equ:fine.euler} which are very similar to
\eqref{equ:denef.type} (cf.\ \cite[eq.~(8.6)]{Hrushovski-Martin}) and
proved the rationality of the abscissa of convergence of the
representation zeta function of an arbitrary finitely generated
nilpotent group as part of \cite[Theorem~1.5]{Hrushovski-Martin}. (It
appears that the methods of \cite{Hrushovski-Martin} might be
sufficient to establish rationality of \emph{all} Euler factors in
\eqref{equ:fine.euler}, though this is not made explicit in
\cite{Hrushovski-Martin}.) The paper neither implies the other
statements of Corollary~\ref{cor} nor the uniformity statements of
Theorem~\ref{thmA}. In fact, (uniform) statements about meromorphic
continuation are not expected to hold in the general setting of zeta
functions associated to definable equivalence relations. Rather, their
validity seems to reflect specific features of representation growth
of finitely generated nilpotent groups; cf.\ also
\cite[Remark~6.6]{Hrushovski-Martin}.

In \cite{Rossmann/15}, Rossmann introduces and studies so-called
topological representation zeta functions associated to unipotent
algebraic group schemes such as the ones considered in the present
paper. Informally, these rational functions are obtained as limits of
local representation zeta functions as `$p \rightarrow 1$'; see
\cite{Rossmann/15} for further details and results.

\subsubsection{Representation growth of `semisimple' arithmetic
  groups}\label{subsec:semisimple} The theory of representation growth
of groups is arguably most developed for arithmetic subgroups of
semisimple algebraic groups over number fields. To be more precise,
let $\G$ be a semisimple group scheme defined over a number field $K$,
together with a fixed embedding $\G \hookrightarrow \GL_d$ for some
$d\in\NN$. Consider, for simplicity's sake, the arithmetic group
$\Gamma = \G(\O_S)$, where $\O_S$ the ring of $S$-integers $\O_S$ of
$K$ for some finite set of places $S$ of $K$, including all the
archimedean ones. If $\Gamma$ satisfies the (strong) Congruence
Subgroup Property (CSP), then the sequence $(r_n(\Gamma))_{n\in\NN}$,
enumerating the $n$-dimensional irreducible complex representations of
$\Gamma$, grows at most polynomially (\cite{LubotzkyMartin/04}) and
the zeta function $\zeta_{\Gamma}(s) := \sum_{n=1}^\infty
r_n(\Gamma)n^{-s}$ is an Euler product of the form
$$\zeta_{\Gamma}(s) = \zeta_{\G(\CC)}(s)^{\vert K : \QQ \vert}
\prod_{v\not\in S}\zeta_{\G(\O_v)}(s);$$
cf.\ \cite[Proposition~1.3]{LarsenLubotzky/08}. Here,
$\zeta_{\G(\CC)}(s)$ denotes the \emph{Witten zeta function},
enumerating the finite-dimensional rational representations of the
algebraic group $\G(\CC)$, whereas each non-archimedean factor
$\zeta_{\G(\O_v)}(s)$ enumerates the continuous representations of the
compact $p$-adic analytic group $\G(\O_v)$. Here, $\O_v$ denotes the
completion of $\O$ at the place~$v$ of~$K$. The Witten zeta functions
are comparatively well understood; see, for instance,
\cite[Theorem~5.1]{LarsenLubotzky/08} for their abscissa of
convergence. To control the non-archimedean factors and the analytic
properties of their Euler product seems much harder. The factors are
not, in general, rational functions in~$p^{-s}$, as the groups
$\G(\O_v)$ are only virtually pro-$p$. Consequently, formulae of Denef
type (cf.~\eqref{equ:denef.type}) cannot be expected for these Euler
factors in general.  A result by Jaikin (\cite{Jaikin-zeta}) expresses
the zeta functions of the groups $\G(\O_v)$ as rational functions in
$p^{-s}$ and terms of the form $n_i^{-s}$ for finitely many integers
$n_i$, but to what degree these expressions may be described uniformly
as the numbers of rational points of globally defined geometric
objects (such as varieties or, more generally, definable sets)
remains unclear.

Most general results about the analytic properties of Euler products
of the non-archi\-medean factors are based on suitable approximations
of the individual Euler factors. Using such techniques, it was shown
that the abscissa of convergence of $\zeta_\Gamma(s)$ is a rational
number (\cite{Avni-Annals}) which only depends on the absolute root
system of the algebraic group $\G$; cf.~\cite{Voll-base-extension}.
For groups of type $A_2$ this abscissa is equal to $1$, the zeta
function allows for meromorphic continuation to at least $\{s\in\CC
\mid \rea(s) > 5/6\}$, and the continued function is analytic in this
domain with the exception of a double pole at $s=1$; cf.\
\cite{AKOVII/14}. It is not known whether this kind of uniform
analytic properties are a general feature of the arithmetic groups
considered in this context.  
In \cite{AizenbudAvni/15}, Aizenbud and Avni establish absolute upper
bounds for the abscissae of convergence of representation zeta
functions of arithmetic lattices of higher rank. We refer to
\cite{Klopsch/13} and \cite{Voll-survey-St.Andrews} for surveys of
representation zeta functions of groups.

\subsubsection{(Normal) subgroup growth of nilpotent
  groups}\label{subsec:subgroup} One precursor of and source of
inspiration in the study of zeta functions in representation growth is
the use of these functions in the related area of subgroup growth of
groups.  The idea to employ zeta functions to enumerate finite-index
subgroups of finitely generated nilpotent groups was pioneered in
\cite{GSS88} by Grunewald, Segal, and Smith and subsequently developed
by many authors; see \cite{SubgroupGrowth, Voll-new-comer-guide} and
references therein.  Zeta functions enumerating finite-index (normal)
subgroups of finitely generated nilpotent groups are Dirichlet
generating functions which satisfy decompositions akin
to~\eqref{equ:coarse.euler} as Euler products of rational functions,
indexed by the rational primes. In~\cite{duSG/00}, du Sautoy and
Grunewald produced formulae of Denef type for the Euler factors
arising in this context, by expressing almost all of them as $p$-adic
\emph{cone integrals}. One of the main results of their paper
establishes analytic properties of Euler products of cone integrals;
cf.\ \cite[Theorem~1.5]{duSG/00}. Together with some control over
remaining Euler factors, this allows them to establish
\cite[Theorem~5.7]{duSG/00}, an analogue of our Corollary~\ref{cor}
for (normal) subgroup zeta functions of groups.

The analogy notwithstanding, Corollary~\ref{cor} does not seem to
follow directly from the results in \cite{duSG/00}, as it is not known
whether (almost all of) the Euler factors of representation zeta
functions of finitely generated nilpotent groups are cone
integrals. This is one of the reasons why novel $p$-adic methods for
the description of the relevant local zeta functions were introduced
in \cite{Hrushovski-Martin}
and~\cite{Voll-Functional-equations-annals}. The latter paper produced
explicit formulae of Denef type for almost all Euler factors
in~\eqref{equ:coarse.euler}. The ideas behind this analysis were
refined in \cite{SV/14} to produce such formulae for (almost all of)
the factors of the ``fine'' Euler products~\eqref{equ:fine.euler}. In
the current paper, we adapt ideas pioneered in \cite{duSG/00} for the
study of Euler products of such formulae, regardless of the question
-- which we cannot decide -- whether or not they are cone integrals.

Having commented on some similarities between subgroup growth and
representation growth of finitely generated nilpotent groups, we
mention a striking dissimilarity: uniform results such as
Theorem~\ref{thmA}, establishing the independence of key analytic data
of zeta functions under base extension, do not hold for normal zeta
functions of finitely generated nilpotent groups. This is already
witnessed by the zeta function $\zeta_{{\bf
    H}(\O)}^{\triangleleft}(s)$ enumerating finite-index normal
subgroups of the Heisenberg groups ${\bf H}(\O)$ over~$\O$: it
satisfies a ``coarse'' Euler product similar to
\eqref{equ:coarse.euler}, but not a ``fine'' one indexed by the places
of $L$, such as~\eqref{equ:fine.euler}. The (coarse) Euler factors
depend in a combinatorially intricate manner on the indexing prime's
decomposition in $L$; cf.\ \cite[Theorem~3]{GSS88}. The abscissa of
convergence of $\zeta_{{\bf H}(\O)}^{\triangleleft}(s)$ is
$2[L:\bb{Q}]$; see \cite{Voll-Schein-I, Voll-Schein-II} for further
details.

\subsection{Organization and notation}
In Section~\ref{sec:rep.T.groups}, we recall from \cite{SV/14} some
tools for the analysis of local representation zeta functions arising
as factors in products such as~\eqref{equ:fine.euler} in terms of
Poincar\'e series associated to matrices of linear forms and $\p$-adic
integrals.  In Section~\ref{sec:denef}, we combine results of and
techniques developed in \cite{duSG/00}, \cite{SV/14}, and
\cite{Voll-compact-p-adic-analytic-arithmetic} to represent these
local zeta functions by formulae of Denef type.  In particular, we
prove that the abscissae of convergence of the local zeta functions
are all elements of a finite set of rational numbers (cf.\ Corollary
\ref{local poles}) which is strictly dominated by the (global)
abscissa of convergence.  Theorem~\ref{thmA} and Corollary~\ref{cor}
are proved in Sections~\ref{sec:proof.thmA} and~\ref{sec:proof.thmB},
respectively.

Given a ring $A$, we write $\Mat_{n\times m}(A)$ for the set of all
$n\times m$ matrices with coefficients in $A$ and $\Mat_{n}(A)$
for~$\Mat_{n\times n}(A)$.  We write $\NN$ for the set of positive
integers and $\NN_0=\NN\cup\{0\}$. Given $k\in\NN_0$, we set $[k] =
\{1,\cdots,k\}$ and $[k]_0=[k]\cup\{0\}$.  Given $r\in\RR$, we write
$\lfloor r\rfloor$ for the largest integer not exceeding $r$.  Given a
variable $x$ and a set of polynomials $F(y)$, we write $xF(y)$ for the
set $\{xf(y) \mid f(y)\in F(y)\}$.

\subsection*{Acknowledgments}
We acknowledge support from the DFG Sonderforschungsbe\-reich 701 at
Bielefeld University. We thank Sebastian Herr, Benjamin Martin and
Tobias Rossmann for several helpful discussions and an anonymous
referee for very valuable feedback. Voll thanks the University of
Auckland and the Alexander von Humboldt Foundation for support during
the final phase of work on this paper.

\section{Representation zeta functions of finitely generated nilpotent
  groups}\label{sec:rep.T.groups}
Let $K$ be a number field and $\O_K$ its ring of integers.  Let
$\Lambda$ be a nilpotent $\O_K$-Lie lattice of nilpotency class
$c$. Without loss of generality~$c\geq 2$.  If $c>2$ we assume that
$\Lambda'\subseteq c!\Lambda$. Let $\G=\G_\Lambda$ be the unipotent
group scheme over $\O_K$ associated to $\Lambda$ as in
{\cite[Sections~2.1.2 and~2.4.1]{SV/14}}. Let $L$ be a finite
extension of $K$ and denote by $\O=\O_L$ its ring of integers. For a
nonzero prime ideal $\p$ of $\O$, we denote by $\O_\p$ the completion
of $\O$ at $\p$, with maximal ideal $\p$, residue field
cardinality~$q$, and residue field characteristic $p$.  Recall that
the Euler product~\eqref{equ:fine.euler} relates the zeta function of
the $\T$-group $\G(\O)$ to the zeta functions of the nilpotent pro-$p$
groups~$\G(\O_\p)$, enumerating the continuous twist isoclasses of
continuous irreducible complex representations of~$\G(\O_\p)$.

In this section, we recapitulate some notation and constructions from
\cite{SV/14} to express almost all of the Euler factors
$\zeta_{\G(\O_\p)}(s)$ in~\eqref{equ:fine.euler} in terms of
Poincar\'e series associated to matrices of linear forms and these
series, in turn, in terms of $\p$-adic integrals. The key tool for
this approach is the Kirillov orbit method for pro-$p$ groups such
as~$\G(\O_\p)$.  When it is applicable, this method allows us to
translate the problem of counting irreducible representations into the
problem of counting co-adjoint orbits of an associated Lie algebra. It
turns out that, for all but finitely many prime ideals~$\p$, the
Kirillov orbit method applies to $\G(\O_\p)$; in the remaining cases
it applies to suitable finite index subgroups of~$\G(\O_\p)$.

In Section~\ref{sec:denef} we will use the results of the current
section to develop formulae of Denef type which uniformly describe
almost all of the Euler factors of the
product~\eqref{equ:fine.euler}. They will allow us, in particular, to
define a finite set $\mathbf{P}$ of rational numbers, depending only
on $\G$, containing the abscissae of convergence of \emph{all} the
Euler factors~$\zeta_{\G(\O_\p)}(s)$. (Here, the abscissa of
convergence of the zeta function of a finitely generated nilpotent
pro-$p$ group is defined as for abstract finitely generated nilpotent
groups.)  In this context, we will refer to the following elementary
lemma which implies that the abscissa of convergence of the zeta
function of a finitely generated nilpotent pro-$p$ group is a
commensurability invariant. Its proof is analogous to that of
{\cite[Proposition 4.13]{Snocken}}, which in turn is a modification of
that of {\cite[Lemma 2.2]{LubotzkyMartin/04}}.

\begin{lemma}\label{abscissa commensurable}
Let $p$ be a prime, $G$ a finitely generated nilpotent pro-$p$ group,
and $H \leq G$ a subgroup of finite index in $G$.  Write $|G:H|=p^a$
and $|G'\cap H:H'|=p^b$ for $a,b\in\NN_0$.  For $n\in\NN_0$, let
$\R_{p^n}(G)=\sum_{i=0}^n\widetilde{r}_{p^i}(G)$ and
$\R_{p^n}(H)=\sum_{i=0}^n\widetilde{r}_{p^i}(H)$ denote the numbers of
continuous twist isoclasses of complex irreducible representations of
dimension at most $p^n$ of $G$ and $H$, respectively.  Then
\begin{equation*}
\R_{p^n}(H)\leq p^a\R_{p^{a+n}}(G)\quad\textrm{ and }\quad
\R_{p^n}(G)\leq p^{a+b}\R_{p^n}(H).
\end{equation*}
In particular, the abscissa of convergence of the representation zeta
function $\zeta_G(s)$ is a commensurability invariant of~$G$,
i.e.\ $\a(G)=\a(H)$.
\end{lemma}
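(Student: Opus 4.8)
The plan is to estimate the counting functions $\R_{p^n}(H)$ and $\R_{p^n}(G)$ against each other by a double-counting argument on irreducible representations, exploiting the fact that in a nilpotent pro-$p$ group the index-$p^a$ subgroup $H$ and the quotient data are finite, so induction and restriction only move dimensions and twist classes by bounded $p$-power factors. The two inequalities are then combined with the $\limsup$ formula for the abscissa of convergence (the analogue, for finitely generated nilpotent pro-$p$ groups, of the displayed formula $\a(G)=\limsup_{N}\log(\sum_{n\le N}\wt r_n(G))/\log N$ recalled in the introduction) to force $\a(G)=\a(H)$.

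First I would prove $\R_{p^n}(H)\le p^a\,\R_{p^{a+n}}(G)$. Given a continuous irreducible representation $\sigma$ of $H$ of dimension $p^i\le p^n$, the induced representation $\Ind_H^G\sigma$ has dimension $p^a\cdot p^i=p^{a+i}\le p^{a+n}$, and by Clifford theory (or Frobenius reciprocity) $\sigma$ is a constituent of $\Res_H^G\rho$ for some irreducible constituent $\rho$ of $\Ind_H^G\sigma$; since $[G:H]=p^a$, the restriction $\Res_H^G\rho$ has at most $p^a$ distinct irreducible constituents, so each irreducible $\rho$ of $G$ of dimension $\le p^{a+n}$ ``covers'' at most $p^a$ twist isoclasses of $H$. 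One has to check that twist-equivalent $\sigma$'s give twist-equivalent $\rho$'s (using that a one-dimensional character of $H$ extends, after passing to a subgroup of bounded index or using that $G/G'$ surjects onto $H/H'$ up to bounded index — this is where the quantity $b=\log_p|G'\cap H:H'|$ enters in the other direction), so that the counting passes to twist isoclasses. Summing over $i\le n$ yields the first inequality; the key point is simply that $\dim$ and twist-class multiplicity are controlled by $p^a$.

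Next I would prove $\R_{p^n}(G)\le p^{a+b}\,\R_{p^n}(H)$, the reverse direction, which is the more delicate one and the main obstacle. Here one starts with an irreducible $\rho$ of $G$ of dimension $p^i\le p^n$ and restricts to $H$: $\Res_H^G\rho$ is a sum of $H$-irreducibles all of dimension $\le p^i\le p^n$, so $\rho$ determines a nonempty set of twist isoclasses of $H$ of dimension $\le p^n$. The content is to bound how many distinct $G$-twist-isoclasses can restrict into the same $H$-twist-isoclass, and this is exactly the point controlled by $b=\log_p|G'\cap H:H'|$: two irreducibles of $G$ sharing an irreducible constituent on $H$ differ by a character of $G/H$ composed with the ambiguity coming from the commutator subgroups, and the number of such is bounded by $|G:H|\cdot|G'\cap H:H'|=p^{a+b}$. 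I would follow the structure of \cite[Proposition 4.13]{Snocken} and \cite[Lemma 2.2]{LubotzkyMartin/04} for the precise bookkeeping, taking care that all representations involved are continuous (automatic, since $H$ has finite index so continuity is preserved under $\Ind$ and $\Res$). Finally, taking logarithms in $\R_{p^n}(H)\le p^a\R_{p^{a+n}}(G)$ and $\R_{p^n}(G)\le p^{a+b}\R_{p^n}(H)$, dividing by $\log p^n$, and letting $n\to\infty$ gives $\a(H)\le\a(G)$ and $\a(G)\le\a(H)$ respectively — the shift from $n$ to $a+n$ is harmless in the limit since $(a+n)/n\to1$ — whence $\a(G)=\a(H)$.
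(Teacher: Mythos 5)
Your plan follows essentially the same route as the paper: the first inequality via the (noncanonical) map sending a twist isoclass of $H$ to the twist isoclass of an irreducible constituent of the induced representation, with fibres bounded by $p^a$; the second via restriction, with the ambiguity bounded by $p^{a+b}$ following Snocken and Lubotzky--Martin; and the conclusion via the $\limsup$ formula, where the shift $n\mapsto n+a$ is harmless. The one point to tighten is your fibre bound for the first inequality: the distinct isoclasses $\widetilde\sigma_1,\dots,\widetilde\sigma_m$ in a fibre occur as constituents of restrictions of \emph{different} twists $\rho_i$ of the same $\widetilde\rho$, so bounding the number of constituents of a single $\Res^G_H\rho$ is not immediately enough — the paper first twists all the $\sigma_i$ into $\Res^G_H\rho_1$ and then compares dimensions ($m\dim\sigma_1\le\dim\rho_1\le p^a\dim\sigma_1$) — but since you explicitly flag the need to handle twist-equivalence there, the plan is sound.
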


\begin{proof}
For $i\in\NN_0$, let $\Irr_{p^i}(G)$ resp.\ $\Irr_{p^i}(H)$ be the set
of twist isoclasses of $G$ resp.\ $H$ of dimension~$p^i$. (Here and in
the sequel, the dimension of a twist isoclass denotes, of course, the
common dimension of all of its elements.) For $i\in[n]_0$ and
$\widetilde{\sigma}\in\Irr_{p^i}(H)$, let
$\widetilde{\Phi}(\widetilde{\sigma})$ be the twist isoclass of an
irreducible constituent of $\Ind^G_H\sigma$, where $\sigma$ is any
representative of the twist isoclass~$\widetilde{\sigma}$. This
defines a (noncanonical) map
$$\widetilde{\Phi}:\bigcup_{i=0}^n\Irr_{p^i}(H) \rightarrow
\bigcup_{i=0}^{n+a}\Irr_{p^i}(G).$$ To establish the first inequality
it suffices to show that the fibre sizes of $\widetilde{\Phi}$ are all
bounded by~$p^a$. Fix thus $\widetilde{\rho}\in\bigcup_{i =
  0}^{n+a}\Irr_{p^i}(G)$ and consider
$\widetilde{\Phi}^{-1}(\widetilde{\rho}) = \{
\widetilde{\sigma}_1,\dots,\widetilde{\sigma}_m\}\subseteq
\bigcup_{i=0}^n \Irr_{p^i}(H)$. Without loss of generality we may
assume that $\dim \widetilde{\sigma}_1 \leq \dim \widetilde{\sigma}_i$
for all $i\in\{2,\dots,m\}$.

By definition of $\widetilde{\Phi}$, there exist $\sigma_i \in
\widetilde{\sigma}_i$ and $\rho_i\in\widetilde{\rho}$ such that
$\rho_i\in\Ind^G_H\sigma_i$ for each~$i$. By Frobenius Reciprocity,
$\sigma_i\in\Res^G_H \rho_i$ for each~$i$. There exist thus continuous
$1$-dimensional representations $\chi_i$ of $G$ such that $(\Res^G_H
\chi_i) \otimes \sigma_i \in \Res^G_H\rho_1$. This implies that
$\Res_H^G\rho_1=(\Res_H^G\chi_1)\otimes\sigma_1\oplus\cdots\oplus(\Res_H^G\chi_m)\otimes
\sigma_m\oplus\rho'$ for some continuous representation $\rho'$ of
$H$.  Hence $m\dim\sigma_1\leq\dim\rho_1 \leq p^a\dim\sigma_1$ and
thus $m\leq p^a$.

The proof of the second inequality is similar.
\end{proof}

\subsection{Poincar\'e series}\label{subsec:poincare}
We now fix a nonzero prime ideal $\p$ of $\O$ and write $\o=\O_\p$.
Let $\g:=\Lambda(\o)=(\Lambda\otimes_{\O_K}\O)\otimes_\O\o $ and write
$\frak{z}$ for the centre of $\g$. Set $h=\rm{rk}_\o(\g)$ and
$$ d=\rm{rk}_\o(\g'), ~ k=\rm{rk}_\o(\i(\g')/\i(\g'\cap\frak{z}))
=\rm{rk}_\o(\i(\g'+\frak{z})/\frak{z}), ~
r-k=\rm{rk}_\o(\g/\i(\g'+\frak{z})),
$$ so that $r=\rm{rk}_\o(\g/\frak{z})$. Here, for a finitely generated
$R$-module $M$ with $R$ either $\O$ or $\o$ and an $R$-submodule $N$
of~$M$, we denote by $\i(N)$ the isolator of $N$ in $M$, that is the
smallest submodule $L$ of $M$ containing $N$ such that $M/L$ is
torsion-free. Note that $\frak{z}$ is isolated in $\g$,
i.e.\ $\i(\frak{z})=\frak{z}$; see \cite[Lemma~2.5]{SV/14}.  Notice
also that $c\leq 2$ if and only if~$k=0$.

We choose a uniformizer $\pi$ of $\o$, write $^{-}$ for the natural
surjection $\g\to\g/\frak{z}$, and choose an ($\o$-)basis ${\bf
  e}=(e_1,\cdots ,e_h)$ for $\g$ such that 
\begin{align*}
(e_{r-k+1},\cdots,e_r) & \textrm{ is a basis for
   $\i(\overline{\g'+\frak{z}})$},\\
(e_{r+1},\cdots, e_{r-k+d}) & \textrm{ is a basis for
   $\i(\g'\cap\frak{z})$, and}\\
(e_{r+1},\cdots,e_h) & \textrm{ is a basis for~$\frak{z}$.}
\end{align*}
It follows from the elementary divisor theorem that there exist
$b_1,\cdots,b_d\in\NN_0$ such that
$(\overline{\pi^{b_1}e_{r-k+1}},\cdots,\overline{\pi^{b_k}e_r})$ and
$(\pi^{b_{k+1}}e_{r+1},\cdots, \pi^{b_d}e_{r-k+d})$ are bases for
$\overline{\g'+\frak{z}}$ and $\g'\cap\frak{z}$ respectively. We
choose a basis ${\bf f}=(f_1,\cdots,f_d)$ for $\g'$ such that
\begin{align*}
(\overline{f_1},\cdots,\overline{f_k}) & = (\overline{\pi^{b_1}e_{r-k+1}},\cdots,\overline{\pi^{b_k}e_r}) \textrm{ and }\\
(f_{k+1},\cdots,f_{d}) & = (\pi^{b_{k+1}}e_{r+1},\cdots,
   \pi^{b_d}e_{r-k+d});
\end{align*}
 cf.\ {\cite[Section~2.2.2]{SV/14}}.

For $i,j\in [r]$ and $l\in[d]$, let $\lambda_{ij}^l\in\o$ be the
structure constants with respect to above bases,
i.e.\ $[e_i,e_j]=\sum_{l=1}^d\lambda_{ij}^lf_l$. We define the
commutator matrix
$$\cal{R}(\bfY)=\left(\sum_{l=1}^d\lambda_{ij}^lY_l \right)_{ij}
\in\Mat_r(\o[\bfY])$$ of $\g$ with respect to the chosen bases and
let $\cal{S}(\bfY)$ be the $r\times k$ submatrix of $\cal{R}(\bfY)$
comprising the last $k$ columns of $\cal{R}(\bfY)$.  If $c=2$, then
the matrix $\cal{S}$ does not feature since $k=0$.

Let $m\in\NN_0$ with $m\leq r$. We say that a matrix
 $S\in\Mat_{r\times m}(\o)$ has (elementary divisor) type
 $\widetilde{\nu}(S)={\bf c}=(c_1,\cdots,c_m) \in
 (\NN_0\cup\{\infty\})^m$ if $S$ is equivalent to 
 $$\begin{pmatrix}
 \pi^{c_1}& & \\
 & \ddots & \\
 & & \pi^{c_m} \\
 & & \\
 \end{pmatrix}\in\Mat_{r\times m}(\o),$$
 with $0\leq c_1\leq\cdots\leq c_m$. Let $R\in\Mat_r(\o)$ be an
 antisymmetric matrix. We set $\nu(R)= (a_1,\cdots,a_{\lfloor
   r/2\rfloor})\in (\NN_0\cup\{\infty\})^{\lfloor r/2 \rfloor}$, where
 $0\leq a_1\leq\cdots\leq a_{\lfloor r/2 \rfloor}$, if
 $$
 \widetilde{\nu}(R)=\left\{
 \begin{array}{ll}
 (a_1,a_1,a_2,a_2,\cdots,a_{r/2,r/2}) & \rm{if $r$ is even,} \\
 (a_1,a_1,a_2,a_2,\cdots,a_{(r-1)/2,(r-1)/2},\infty) & \rm{if $r$ is odd.}
 \end{array}
 \right.$$

 Let $N\in\NN_0$. Given an antisymmetric matrix
 $\overline{R}\in\Mat_r(\o/\p^N)$, we set
 $\nu(\overline{R}):=(\min\{a_i,N\})_{i\in[\lfloor
     r/2\rfloor]}\in([N]_0)^{\lfloor r/2\rfloor}$, where ${\bf a} =
 (a_1,\dots,a_{\lfloor r/2 \rfloor}) =\nu(R)$ is the type of any lift
 $R$ of $\overline{R}$ under the natural surjection
 $\Mat_r(\o)\to\Mat_r(\o/\p^N)$.  Given $\overline{S}\in\Mat_{r\times
   k}(\o/\p^N)$, the vector $\widetilde{\nu}(\overline{S})\in
 ([N]_0)^k$ is defined similarly. We set $W_N(\o):=(\o/\p^N)^d
 \setminus (\p/\p^N)^d$ if $N\in\NN$ and $W_0(\o)=\{0\}$.  Given
 $N\in\NN_0$, ${\bf a}\in\NN_0^{\lfloor r/2\rfloor}$, and ${\bf
   c}\in\NN_0^k$, we set
 $$\cal{N}^\o_{N,{\bf a},{\bf c}}:=\#\{ {\bf y}\in
 W_N(\o) \mid\nu(\cal{R}({\bf y}))={\bf a}, \,
 \widetilde{\nu}(\cal{S}({\bf y})\cdot
 \diag(\pi^{b_1},\cdots,\pi^{b_k}))={\bf c}\},$$ giving rise to the
 Poincar\'{e} series
 \begin{equation}\label{Poincare define}
 \cal{P}_{\cal{R},\cal{S},\o}(s):= \sum_{\substack{N\in\NN_0 \\ {\bf
       a}\in\NN_0^{\lfloor r/2\rfloor}, \,{\bf c}\in\NN_0^k}}
 \cal{N}^\o_{N,{\bf a},{\bf c}}q^{ -\sum_{i=1}^{\lfloor
     r/2\rfloor}(N-a_i)s -\sum_{i=1}^k(N-c_i) }.
 \end{equation}

 \begin{proposition} {\cite[Propositions~2.9 and 2.18]{SV/14}}
 \label{pro:poin.p}
 If $(p,c)\neq(2,3)$, then
 $$\zeta_{\G(\O_\p)}(s)=\cal{P}_{\cal{R},\cal{S},\O_\p}(s).$$
 \end{proposition}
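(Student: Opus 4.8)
The plan is to invoke the Kirillov orbit method for the pro-$p$ group $\G(\O_\p)$, which applies whenever $(p,c)\neq(2,3)$ by the results of \cite{SV/14} (this is precisely where the hypothesis on $(p,c)$ enters: the Hausdorff series needs to converge $\p$-adically on $\g=\Lambda(\o)$, and the exceptional case is exactly the one for which this fails). Once the orbit method applies, the continuous twist isoclasses of irreducible representations of $\G(\O_\p)$ are parametrized by certain co-adjoint orbits of $\g$, and counting these by dimension reduces to a lattice-point count governed by the elementary divisor types of the commutator matrix $\cal{R}(\bfY)$ evaluated at points of $\g/\g'$, together with the auxiliary data recorded by $\cal{S}(\bfY)$ in the case $c>2$ (i.e.\ $k>0$).

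First I would recall from \cite[Section~2.2]{SV/14} the precise dictionary: a linear functional on $\g$ (modulo the radical of its associated alternating form) gives an orbit whose size, and hence the dimension of the corresponding irreducible representation, is $q^{\sum_i a_i}$ where the $a_i$ are read off from $\nu(\cal{R}(\bfy))$ for the relevant $\bfy\in W_N(\o)$; the shift by $(N-a_i)$ in the exponent of $q$ in \eqref{Poincare define} accounts for passing from functionals on the finite quotient $\g/\p^N\g$ to the count of twist isoclasses, and the $\sum_i(N-c_i)$ term handles the contribution of the centre modulo $\g'$, encoded through $\cal{S}(\bfy)\cdot\diag(\pi^{b_1},\dots,\pi^{b_k})$. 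Summing $\wt r_{p^i}(\G(\O_\p))\,q^{-is}$ over all $i$ then regroups exactly into the double sum defining $\cal{P}_{\cal{R},\cal{S},\O_\p}(s)$, because the numbers $\cal N^\o_{N,\bfa,\bfc}$ are, by construction, the counts of functionals producing a representation of the prescribed dimension. The identification $\zeta_{\G(\O_\p)}(s)=\cal{P}_{\cal{R},\cal{S},\O_\p}(s)$ is then the content of \cite[Propositions~2.9 and~2.18]{SV/14}, the first treating the case $c=2$ (where $\cal S$ does not appear) and the second the case $c>2$.

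The main obstacle, and the only nontrivial point beyond bookkeeping, is verifying the applicability of the orbit method outside the exceptional pair $(p,c)=(2,3)$: one must check that for $(p,c)\neq(2,3)$ the group $\G(\O_\p)$ is \emph{saturable} (or at least that a logarithm/exponential correspondence of the right kind exists between $\G(\O_\p)$ and $\g$ preserving the relevant enumerative data), so that Howe--Kirillov parametrization is valid verbatim; for $c>2$ this uses the hypothesis $\Lambda'\subseteq c!\Lambda$, which forces $\g'$ into $p\g$ whenever $p\leq c$, while for $p>c$ it is automatic. I would cite \cite[Sections~2.1--2.4]{SV/14} for this, since the present statement is simply a restatement of their Propositions~2.9 and~2.18; no new argument is needed here, and the proof consists of assembling the parametrization with the definition of $\cal{P}_{\cal{R},\cal{S},\O_\p}(s)$ and observing the two sides agree coefficient by coefficient.
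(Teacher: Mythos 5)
Your proposal takes the same route as the paper, which offers no independent proof of this proposition but cites \cite[Propositions~2.9 and~2.18]{SV/14} directly, remarking only that the hypothesis $(p,c)\neq(2,3)$ is precisely what guarantees the applicability of the Kirillov orbit method to $\G(\O_\p)$; your sketch of the orbit-method dictionary and of the role of $\cal{R}$, $\cal{S}$ and the counts $\cal{N}^\o_{N,\bfa,\bfc}$ is consistent with that. One minor imprecision: the exclusion of $(p,c)=(2,3)$ is not a matter of $\p$-adic convergence of the Hausdorff series (that is already secured by the standing assumption $\Lambda'\subseteq c!\Lambda$), but of the Kirillov correspondence holding for the full group $\G(\O_\p)$ rather than only for a finite-index subgroup --- compare the paper's subsequent discussion of $p=2$, $c=3$, where one must pass to the uniform pro-$2$ group $\exp(2\Lambda\otimes_\O\o)$.
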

Key to writing the zeta function $\zeta_{\G(\O_\p)}(s)$ as a
Poincar\'{e} series is the fact that the hypothesis of
Proposition~\ref{pro:poin.p} ensures that the Kirillov orbit method is
applicable to the pro-$p$ group~$\G(\O_\p)$; see
{\cite[Section~2.2]{SV/14}} for details.

Assume now that $p=2$ and $c=3$.  In this case we consider, instead of
$\g$, a suitable congruence sublattice of $\g$ giving rise to a
pro-$2$ group to which the Kirillov orbit method is applicable. Let
$e=e(\o,\ZZ_2)$ be the absolute ramification index of~$\o$,
i.e.\ $2\o=\p^e\o$, and consider $\g^{e}=2\Lambda\otimes_\O\o$. Since
$[2\Lambda,2\Lambda]=4\Lambda'\subseteq 4\cdot 3!\Lambda \subseteq
4\cdot 2\Lambda$, we have that $(\g^{e})'\subseteq 4\g^{e}$,
i.e.\ $\g^{e}$ is powerful. Hence the group $\G^e(\o) := \exp(\g^e)$
is a powerful pro-$2$ group.  Moreover, since $\G^e(\o)$ is finitely
generated torsion-free, {\cite[Theorem~4.5]{Segal-analytic-pro-p}}
yields that the group $\G^e(\o)$ is a uniform pro-$2$ group. By
{\cite[Theorem~2.12]{Jaikin-zeta}}, there exists a Kirillov
correspondence between the finite co-adjoint orbits in the dual of the
Lie algebra $\g^{e}$ and the continuous irreducible representations of
$\G^{e}(\o)$. Multiplying the chosen basis ${\bf e}$ for $\g$ by
$\pi^e$ yields a basis $\pi^e{\bf e}$ for $\g^{e}$.  Hence, the zeta
function $\zeta_{\G^e(\o)}(s)$ can be expressed in term of a
Poincar\'e series as in \eqref{Poincare define}.

\subsection{$\p$-Adic integration}\label{subsec:int}
 It follows from
 {\cite[Section~2.2]{Voll-Functional-equations-annals}} that
 \begin{equation}\label{poincare-I=1}
  \cal{P}_{\cal{R},\cal{S},\frak{o}}(s)=1+(1-q^{-1})^{-1}\cal{Z}_\frak{o}(-s/2,-1,us+v-d-1)),
 \end{equation}
 where $\cal{Z}_\frak{o}(\rho,\sigma,\tau)$ is the $\p$-adic integral
 defined in {\cite[(2.8)]{SV/14}} as follows:
 \begin{multline}\label{Zpadic}
  \cal{Z}_\frak{o}(\rho,\sigma,\tau)=\\\Zpadic.
 \end{multline}
 Here $\mu$ is the additive Haar measure on $\o^{d+1}$ is normalised so
 that $\mu(\o^{d+1})=1$, and
 \begin{align}
   2u & =  \max\{\rm{rk}_{\rm{Frac}(\o)}(\cal{R}({\bf z}))\mid{\bf z}\in\o^d \},\nonumber\\
   v & =  \max\{\rm{rk}_{\rm{Frac}(\o)}(\cal{S}({\bf z}))\mid {\bf z}\in\o^d \},\nonumber\\
   {F}_j(\bfY) & = \{ f\mid f=f(\bfY)~\rm{a principal}~2j\times
   2j ~\rm{minor of}
   ~\cal{R}(\bfY) \},\label{eq:poly F}\\
   {G}_i(\bfY) & = \{g\mid g=g(\bfY)~\rm{an $i\times i$ minor
     of}~
   \cal{S}(\bfY)\cdot\diag(\pi^{b_1},\cdots,\pi^{b_k})\},\label{eq:poly G}\\
   \|H(X,\bfY)\|_\p & = \max\{|h(X,\bfY)|_\p\mid h\in H\}
   ~\rm{for a finite set}~H\subset\o[X,\bfY].\nonumber
 \end{align} 

Notice that $v=0$ if $c=2$. 

\section{Formulae of Denef type for local representation zeta
  functions}\label{sec:denef}
 In this section we produce uniform formulae of Denef type for (almost
 all of the) local zeta functions $\zeta_{\G(\O_\p)}(s)$ arising as
 Euler factors in~\eqref{equ:fine.euler}.  They enable us to obtain
 information about the abscissae of convergence of all Euler factors
 (Corollary~\ref{local poles}) and will be put to use in
 Section~\ref{sec:proof.thmA}, where Theorem~\ref{thmA} will be
 proven.

 For a nonzero prime ideal $\p$ of $\O$, we write $\o=\O_\p$ for the
 completion of $\O$ at~$\p$.  As we have seen in
 Section~\ref{sec:rep.T.groups}, almost all of the factors
 $\zeta_{\G(\o)}(s)$ in the Euler product \eqref{equ:fine.euler} are
 given by Poincar\'{e} series $\cal{P}_{\cal{R},\cal{S},\frak{o}}(s)$,
 which can be expressed in terms of the $p$-adic integral
 $\cal{Z}_\frak{o}(\rho,\sigma,\tau)$ defined in~\eqref{Zpadic}. In
 order to analyze such integrals in a uniform manner we recall from
 {\cite[Section~2]{Voll-Functional-equations-annals}} some formulae
 for $\p$-adic integrals of the following general form. Fix
 $d,l\in\NN$. Let $I_\k, \k\in[l]$, be finite index sets, $x$ and
 ${\bf y}=(y_{ij})_{1\leq i,j \leq d}$ integration variables, and fix
 nonnegative integers $e_{\k\i}$ and finite sets of polynomials
 $F_{\k\i}(\bfY)$ over~$\O_K$ for $\k\in[l]$, $\iota\in I_\kappa$.  We
 define
 \begin{equation}\label{equ:general.Z}
   Z({\bf s})= Z(s_1,\dots,s_l):=\ZIpadic.
 \end{equation} 
 Here ${\bf s}=(s_1,\cdots,s_l)$ is a vector of complex variables and
 $d\mu$ denotes the product of the additive Haar measure on $\p$ and
 the Haar measure on $\GL_d(\o)$, normalized so that $d\mu(\p \times
 \GL_d(\o)) = q^{-1}\prod_{i=1}^d(1-q^{-i})$. We assume throughout
 that the ideals $(F_{\kappa\iota})$ are all invariant under the
 natural (right-)action of the standard Borel subgroup $B \subseteq
 \GL_d$. Equation~\eqref{equ:general.Z} is a slight specialization of
 the integral given in
 \cite[eq.~(6)]{Voll-Functional-equations-annals}: we make some of the
 hypotheses of \cite[Theorem~2.2]{Voll-Functional-equations-annals}
 and restrict to the case~$I=\{1\}$. Note that whilst
 \eqref{equ:general.Z} is a $\p$-adic (``local'') integral, its
 integrand is defined over~$\O_K$, i.e.\ ``globally''.

 We fix a principalization $(Y,h)$ with $h:Y\rightarrow \GL_d/B$ of
 the $\O_K$-ideal $$\cal{I}=\prod_{\k\in[l]}\prod_{\i\in
   I_\k}(F_{\k\i}).$$ Let $(N_{u\k\i},\nu_u)_{u\k\i}$ be the
 associated numerical data, with $u\in T$, $\k\in[l]$, and $\i\in
 I_\k$. Here, $T$ is a finite set indexing the irreducible components
 $E_u$ of the pre-image under $h$ of the variety defined
 by~$\cal{I}$. For $u,\k, \i$ as above, the number $\nu_u-1$ denotes
 the multiplicity of $E_u$ in the divisor $h^*(d\mu(\bfy))$ and
 $N_{u\k\i}$ denotes the multiplicity of $E_u$ in the pre-image under
 $h$ of the variety defined by the ideal $(F_{\k\i})$.  The
 principalization $(Y,h)$ has good reduction modulo $\p$ (cf.\
 {\cite[Definition 2.2]{Denef-degree}}) for all but finitely
 many~$\p$; cf.~\cite[Theorem 2.4]{Denef-degree}. In the remaining
 cases we say that $(Y,h)$ has bad reduction modulo~$\p$.

 In Sections \ref{subsec:good.red} and \ref{subsec:bad.red} we compute
 formulae of Denef type for $Z({\bf s})$ in case that $(Y,h)$ has good
 respectively bad reduction modulo $\p$. In Section~\ref{subsec:app}
 we specialize these formulae to the
 integrals~$\cal{Z}_\frak{o}(\rho,\sigma,\tau)$ defined in
 Section~\ref{subsec:int}.
 \subsection{Good reduction}\label{subsec:good.red}
 Assume that $\p$ is such that $(Y,h)$ has good reduction modulo~$\p$.
 According to {\cite[Theorem 2.2]{Voll-Functional-equations-annals}},
 \begin{equation}\label{equ:Z(s).general.good}
 Z({\bf s})= \frac{ (1-q^{-1})^{d+1}}{ q^{\binom{d}{2}} }
 \sum_{U\subseteq T} c_U(\o/\p)(q-1)^{|U|}\Xi_{U}(q,{\bf s}).
 \end{equation}
 Here, for each $U\subseteq T$, the coefficient $c_U(\o/\p)$ is the
 number of $\o/\p$-rational points of
 $\overline{E_U}\setminus\cup_{V\varsupsetneq U}\overline{E_V}$
 $(E_U:=\cap_{u\in U}E_u)$, where $~\bar{}~$ denotes reduction modulo
 $\p$.  Furthermore, $q$ denotes the cardinality of the residue field
 $\o/\p$ and
 \begin{equation}\label{def:Xi_U}
 \Xi_{U }(q,{\bf s}) = \sum_{ \substack{(m_u)_{u\in U}\in\NN^{|U|}
     \\ m_{t+1}\in\NN} } q^{-\mathcal{L}((m_u)_{u\in U},m_{t+1})
   -\sum_{\k\in[l]}s_\k\min\{\mathcal{L}_{\k\i}((m_u)_{u\in
     U},m_{t+1})\mid\iota\in I_{\kappa}\} },
 \end{equation}
 where $t=|T|$ and
 \begin{align}
   \begin{split}\label{def linear forms}
     \mathcal{L}((m_u)_{u\in U},m_{t+1}) & = m_{t+1}+\sum_{u\in U}\nu_um_u,
     \\ \mathcal{L}_{\k\i}((m_u)_{u\in U},m_{t+1}) & = e_{\kappa\iota}m_{t+1
     }+\sum_{u\in U} N_{u\kappa\iota}m_u,~ \textrm{for}~\k\in [l],
     \i\in I_\k.
 \end{split}
 \end{align}

\begin{remark}
 We denote here by $c_U(\o/\p)$ what is called $c_U(q)$ in
 \cite{SV/14} and \cite{Voll-Functional-equations-annals}. The latter
 notation obscures the fact that this quantity may depend on the prime
 ideal~$\p$ and not just on the cardinality of the residue
 field. (Texts such as \cite{Denef-degree} avoided this pitfall.) We
 are grateful to Tobias Rossmann for pointing this out to us.
\end{remark}

 We uniformly rewrite the functions $\Xi_{U}(q,s)$ in terms of zeta
 functions of polyhedral cones in a fan, in close analogy to the proof
 of {\cite[Lemma 3.1]{duSG/00}}.  We first extend the domain of
 summation in $\Xi_{U}(q,s)$ by taking the sum over all ${\bf
   m}=(m_1,\cdots,m_{t+1})\in\NN^{t}_0\times\NN$ with $m_u=0$ iff
 $u\in T\setminus U$.  Choose a finite triangulation $\{R_i
 \}_{i\in[w]_0}$ of $\bb{R}_{\geq 0}^{t+1}$ consisting of relatively
 open, pairwise disjoint \emph{simple} rational polyhedral cones
 $R_i$, which eliminates the ``min"-terms in the exponent of $q$ in
 $\Xi_{U}(q,{\bf s})$.  (It will be important for future purposes to
 note that this decomposition can be chosen independently of~$\p$,
 depending just on the chosen principalization $(Y,h)$.)  We assume
 that $R_0=\{{\bf 0}\}$ and $R_1,\cdots,R_{z}$ are exactly the
 one-dimensional cones (or rays) in $\{R_i\}_{i\in[w]}$. For every
 $j\in[z]$, write $R_j=\bb{R}_{>0}{\bf r}_j$, where ${\bf
   r}_j\in\NN_0^{t+1}$ is the shortest integral vector on the
 ray~$R_j$.  For every $i\in[w]$ there exists a set $M_i\subseteq[z]$
 of rays such that
  \begin{equation*}
   R_i=\bigoplus_{j\in M_i}\bb{R}_{>0}{\bf r}_j
  \end{equation*}
  (direct sum of monoids). Note that $i\in[z]$ if and only if
  $|M_i|=1$, in which case $M_i=\{i\}$. For $i\in[w]$, the simplicity
  of $R_i$ yields that
  \begin{equation}\label{equ:R_tau_i}
   R_i \cap \NN_0^{t+1} = \bigoplus_{j\in M_i}\bb{N}{\bf r}_j.
  \end{equation} 
 
 For $U\subseteq T$, set
 \begin{equation*}
  \cal{C}_{U}=\{{\bf m}\in\NN_0^{t}\times\NN\mid
  \textrm{$m_u=0$ iff $u\in T\setminus
    U$}\},
 \end{equation*}
the domain of summation in~\eqref{def:Xi_U}. As
$\dot\bigcup_{U\subseteq T} \cal{C}_U = \NN_{0}^t \times \NN$, not all
of the cones $R_i$, $i\in[w]$, are relevant for sums of the
form~\eqref{def:Xi_U}.  Rather, for each $U\subseteq T$ there exists a
uniquely determined subset $W_U'\subseteq [w]$ such that
 \begin{equation}\label{equ:C_tau_U}
  \cal{C}_{U}=\dot\bigcup_{i\in W_U'}R_i\cap \NN_0^{t+1}.
 \end{equation} 
The set $W' := \bigcup_{U\subseteq T}W'_U \subseteq [w]$ indexes the
cones which do not lie in the boundary component $\RR^{t}_{\geq 0}
\times \{0\}$ of $\RR^{t+1}_{\geq 0}$.
 
Let $i\in W'$. Restricting the summation in \eqref{def:Xi_U} to $R_i$
-- or, indeed, its closure --, allows us to rewrite the exponent of
$q$ in this sum 
as $$-\sum_{u=1}^{t+1}\left(\sum_{\k\in[l]}C_{u\k}
s_\k+D_u\right)m_u$$ for suitable $C_{u\k}, D_u\in\NN_{\geq 0}$ for
all $u\in [t+1], \k\in[l]$.  For $j\in[z]$, write $${\bf
  r}_j=(r_{j1},\cdots,r_{j\,t+1})\in\NN_{\geq 0}^{t+1}.$$ For each
$j\in M_i$ and $\k\in[l]$, set
\begin{equation*}
A_{j\k}=\sum_{u=1}^{t+1}r_{ju}
C_{u\k} \quad \textrm{ and } \quad B_j=\sum_{u=1}^{t+1}r_{ju} D_u.
\end{equation*} 
Note that $B_j>0$ for all $j$ as, for $u\in [t]$, we find that
$r_{ju}>0$ iff $D_u>0$. Combining \eqref{equ:C_tau_U} and
\eqref{equ:R_tau_i} yields, for $U\subseteq T$,
 \begin{align}\label{Xi_Ut}
   \Xi_{U}(q,{\bf s})
   &= \sum_{i\in W_U'}\sum_{{\bf m}\in
     R_i\cap(\NN_0^{t}\times\NN)}
   q^{-\sum_{u=1}^{t+1}(\sum_{\k\in[l]}C_{u\k} s_\k +D_u)m_u}\nonumber\\
   &= \sum_{i\in W_U'}\prod_{j\in
     M_i}\frac{q^{-(\sum_{\k\in[l]}A_{j\k}
       s_\k+B_j)}}{1-q^{-(\sum_{\k\in[l]}A_{j\k}
       s_\k+B_j)}}.
 \end{align} 
 Given $i\in W'$, set
 \begin{equation}\label{def-c^t_i} 
 c_i(\o/\p)=c_U(\o/\p)~ \rm{and}~ U_i=U,
 \end{equation}
 where $U\subseteq T$ is the unique subset of $T$ such that $i\in
 W_U'$.
 \begin{proposition}\label{pro:denef.good}
   Assume that $(Y,h)$ has good reduction modulo $\p$. Then
 \begin{equation}\label{equ:denef.general}
 Z({\bf s}) = \frac{(1-q^{-1})^{d+1}}{q^{\binom{d}{2}}}
  \sum_{i\in W'}
 c_i(\o/\p)(q-1)^{|U_i|} \prod_{j\in
   M_i}\frac{q^{-(\sum_{\k\in[l]}A_{j\k}
     s_\k+B_j)}}{1-q^{-(\sum_{\k\in[l]}A_{j\k}
     s_\k+B_j)}}.
 \end{equation}
 \end{proposition}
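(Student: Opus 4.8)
The plan is to derive \eqref{equ:denef.general} by combining the two inputs already assembled in the excerpt: the quoted formula \eqref{equ:Z(s).general.good} from \cite[Theorem~2.2]{Voll-Functional-equations-annals}, which expresses $Z(\bfs)$ as a sum over subsets $U\subseteq T$ of terms $c_U(\o/\p)(q-1)^{|U|}\Xi_U(q,\bfs)$ with the combinatorial prefactor $(1-q^{-1})^{d+1}q^{-\binom{d}{2}}$, and the closed-form rewriting \eqref{Xi_Ut} of each $\Xi_U(q,\bfs)$ as a finite sum over the cones $i\in W_U'$ of the monomial products $\prod_{j\in M_i} q^{-(\sum_\k A_{j\k}s_\k+B_j)}/(1-q^{-(\sum_\k A_{j\k}s_\k+B_j)})$. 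So the proof is essentially a substitution followed by a reorganization of the double sum.

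First I would recall that, by \eqref{equ:Z(s).general.good}, under the good-reduction hypothesis
\[
Z(\bfs)=\frac{(1-q^{-1})^{d+1}}{q^{\binom{d}{2}}}\sum_{U\subseteq T}c_U(\o/\p)(q-1)^{|U|}\,\Xi_U(q,\bfs).
\]
Next I would substitute the expression \eqref{Xi_Ut} for $\Xi_U(q,\bfs)$, obtaining a double sum indexed first by $U\subseteq T$ and then, for each $U$, by $i\in W_U'$. The key observation, already encoded in \eqref{equ:C_tau_U}, is that the sets $\cal C_U$ partition $\NN_0^t\times\NN$, hence the families $W_U'$ for $U\subseteq T$ are pairwise disjoint and their union is $W'=\bigcup_{U\subseteq T}W_U'$; in particular every $i\in W'$ belongs to exactly one $W_U'$, which is precisely the content of the definitions \eqref{def-c^t_i} assigning to $i$ the unique $U_i=U$ with $i\in W_U'$ and setting $c_i(\o/\p)=c_{U_i}(\o/\p)$. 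Therefore the double sum over pairs $(U,i)$ with $i\in W_U'$ collapses to a single sum over $i\in W'$, with the factor $c_U(\o/\p)(q-1)^{|U|}$ becoming $c_i(\o/\p)(q-1)^{|U_i|}$ and the inner summand becoming $\prod_{j\in M_i}q^{-(\sum_\k A_{j\k}s_\k+B_j)}/(1-q^{-(\sum_\k A_{j\k}s_\k+B_j)})$. Pulling the common prefactor $(1-q^{-1})^{d+1}q^{-\binom{d}{2}}$ out front yields exactly \eqref{equ:denef.general}.

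Since nearly all the analytic and geometric work has been done in setting up \eqref{equ:Z(s).general.good} and \eqref{Xi_Ut}, there is no serious obstacle remaining; the proof is a bookkeeping argument. The one point that warrants a sentence of care is the justification that the triangulation $\{R_i\}_{i\in[w]_0}$, and hence the index set $W'$ together with the integers $A_{j\k}, B_j$ and the cone data $M_i$, can be chosen depending only on the principalization $(Y,h)$ and not on~$\p$ --- this is essential for the later uniformity arguments and was flagged parenthetically when the triangulation was introduced. I would remark that, granting this, the only $\p$-dependence in \eqref{equ:denef.general} resides in $q=|\o/\p|$ and in the point-counts $c_i(\o/\p)$, which is exactly the shape of a formula of Denef type as in \eqref{equ:denef.type}. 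This sets up the specialization to the integrals $\cal Z_{\frak o}(\rho,\sigma,\tau)$ carried out in Section~\ref{subsec:app}.
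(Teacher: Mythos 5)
Your proposal is correct and matches the paper's own proof: both substitute the cone decomposition \eqref{Xi_Ut} into \eqref{equ:Z(s).general.good} and collapse the resulting double sum over pairs $(U,i)$ with $i\in W_U'$ into a single sum over $i\in W'$, using the disjointness of the sets $W_U'$ encoded in \eqref{equ:C_tau_U} and \eqref{def-c^t_i}. The additional remarks on the $\p$-independence of the triangulation are consistent with the paper's parenthetical note and do not alter the argument.
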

 \begin{proof}
 Observing that 
$$\dot\bigcup_{U\subseteq T} \dot\bigcup_{i\in W_U'} R_i = \dot\bigcup_{i\in W'}
 R_i = \RR_{\geq 0}^t \times \RR_{>0}$$ and using
 \eqref{equ:Z(s).general.good} and \eqref{Xi_Ut}, we obtain
 \begin{align*}
 Z({\bf s}) & = \frac{(1-q^{-1})^{d+1}}{q^{\binom{d}{2}}}
 \sum_{U\subseteq T}c_{U}(\o/\p)(q-1)^{|U|}\sum_{i\in
   W_U'}\prod_{j\in M_i}\frac{q^{-(\sum_{\k\in[l]}A_{j\k}
     s_\k+B_j)}}{1-q^{-(\sum_{\k\in[l]}A_{j\k} s_\k+B_j)}}
 \\ & = \frac{(1-q^{-1})^{d+1}}{q^{\binom{d}{2}}} 
 \sum_{i\in W'} c_i(\o/\p)(q-1)^{|U_i|} \prod_{j\in
   M_i}\frac{q^{-(\sum_{\k\in[l]}A_{j\k}
     s_\k+B_j)}}{1-q^{-(\sum_{\k\in[l]}A_{j\k} s_\k+B_j)}}.\qedhere
 \end{align*}\end{proof}


 \subsection{Bad reduction} \label{subsec:bad.red} Assume now that $\p$
 is one of the finitely many prime ideals of $\O$ such that the
 principalization $(Y,h)$ does not have good reduction modulo~$\p$.
 In this case, formula \eqref{equ:denef.general} may not
 hold. Informally speaking, this is due to the breakdown of Hensel's
 lemma if the components $E_u$ are singular or have nonnormal
 crossings modulo~$\p$. To a certain extent, this failure can be
 mitigated by analyzing the integral defining $Z(\bfs)$ -- or rather
 its pull-back under the principalization~$h$ -- on cosets modulo
 $\p^N$ for some $N\in\NN$; in the case of good reduction, $N=1$ is
 sufficient. This motivates the following definition, generalizing the
 rational functions $\Xi_U$ introduced in~\eqref{def:Xi_U}. For
 $N\in\NN$, $U\subseteq T$, $(d_{\k\i})\in\NN_0^{\prod_{\k\in[l]}
   I_\k}$, set
 \begin{equation}\label{def:Xi_U.general}
   \Xi_{U,(d_{\kappa\iota})}^N (q,{\bf s}) =
   \sum_{\substack{(m_u)_{u\in U}\in\NN^{|U|}_{\geq
         N},\\ m_{t+1}\in\NN}} q^{-\mathcal{L}((m_u)_{u\in
       U},m_{t+1})-\sum_{\k\in[l]}s_\k \min\{ \mathcal{L}_{\k\iota}((m_u)_{u\in
       U},m_{t+1})-d_{\k\iota} \mid \iota\in I_\k \} }.
 \end{equation}
 Here, $\mathcal{L}$ and $\{\mathcal{L}_{\k\iota}\}_{\k\in[l], \i\in
   I_\k}$ are the linear forms defined in~\eqref{def linear forms}.
 The following is analogous to and proven in the same way as
 {\cite[Corollary 4.2]{Voll-compact-p-adic-analytic-arithmetic}}.
 \begin{proposition}\label{pro:denef.bad}
   There exist $N\in\NN$, finite sets $J\subset\NN_0$ and
   $\Delta\subset \NN_0^{\prod_{\k\in[l]} I_\k}$, all depending on $\p$,
   such that
  \begin{equation}\label{equ:Z(s).general.bad}
    Z({\bf s})=\frac{(1-q^{-1})^{d+1}}{q^{N\binom{d}{2}}}
    \sum_{\substack{U\subseteq T,j\in J\\(d_{\kappa\iota})\in \Delta
    }} c_{U,j,(d_{\kappa\iota})}(\o/\p) (q^N-q^{N-1})^{|U|}q^{-j} \,
    \Xi_{U,(d_{\kappa\iota})}^N (q,{\bf s}),
  \end{equation}
  where each $c_{U,j,(d_{\kappa\iota})}(\o/\p)$ is the number of
  $\o/\p$-rational points of a certain variety over~$\o/\p\cong\bb{F}_q$.
 \end{proposition}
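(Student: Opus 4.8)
The plan is to follow the proof of \cite[Corollary~4.2]{Voll-compact-p-adic-analytic-arithmetic}, which establishes the analogue of this statement for the family of $\p$-adic integrals arising in the study of compact $p$-adic analytic groups, the extra integration variable $x$ with its exponents $e_{\k\i}$ being incorporated exactly as in the good-reduction computation of Section~\ref{subsec:good.red}. As there, one pulls the integral \eqref{equ:general.Z} back along the fixed principalization $h\colon Y\to\GL_d/B$ (the $B$-invariant sets $F_{\k\i}$ descending to $\GL_d/B$) and evaluates the result chart by chart on $Y$. In a chart with local coordinates adapted to the components $E_u$ of the preimage under $h$ of the variety defined by $\mathcal{I}$, each $h^*F_{\k\i}$ factors as a monomial $\prod_u z_u^{N_{u\k\i}}$ times a cofactor, and the Jacobian of $h$ contributes the exponents $\nu_u-1$. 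What fails in the bad-reduction case is that, modulo $\p$, the $E_u$ may become singular or cease to cross normally, so the cofactors need no longer have non-vanishing reduction and Hensel's lemma can break down at the level of single residue classes modulo $\p$.

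The remedy is to refine the stratification of $Y(\o)$ from residue classes modulo $\p$ to residue classes modulo $\p^N$, for a suitable $N=N(\p)\in\NN$ depending on the bad prime. The key --- and only mildly delicate --- point is that $N$ can be chosen large enough that, on each such class, all relevant valuation data \emph{stabilise}: for every coordinate $z_u$ the $\p$-adic valuation $v(z_u)$ is either a fixed integer less than $N$ or is unconstrained beyond $v(z_u)\ge N$ (the latter indices determining a subset $U\subseteq T$), and for every $\k\in[l]$ and $\i\in I_\k$ the valuation of the cofactor of $F_{\k\i}$ is constant on the class. Such an $N$ exists by a standard Noetherianity argument: each cofactor is a fixed nonzero polynomial over $\O_K$, hence a unit in the relevant local ring over $K$, so its $\p$-adic valuation is bounded and eventually locally constant. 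This is precisely the content of \cite[Section~4]{Voll-compact-p-adic-analytic-arithmetic}.

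Granting this, fix a residue class modulo $\p^N$ of type $U$. On it the pulled-back integrand of \eqref{equ:general.Z} is a product of $q^{-j}$ with $q^{-\mathcal{L}((m_u)_{u\in U},m_{t+1})}$ and, for each $\k\in[l]$, a factor $q^{-s_\k\min\{\mathcal{L}_{\k\i}((m_u)_{u\in U},m_{t+1})-d_{\k\i}\mid\i\in I_\k\}}$, where $m_u=v(z_u)\ge N$ for $u\in U$ and $m_{t+1}=v(x)\in\NN$, the integer $j\ge0$ collects the fixed-digit contribution to the exponent of $q$, the integers $d_{\k\i}$ are coset-dependent shifts built from the fixed digits and the cofactor valuations, and $\mathcal{L},\mathcal{L}_{\k\i}$ are the linear forms of \eqref{def linear forms}; by construction this matches the general term of $\Xi^N_{U,(d_{\k\i})}$ in \eqref{def:Xi_U.general}. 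Summing the ensuing geometric progression over the free valuations $m_u\ge N$ and over $m_{t+1}\in\NN$, weighted by the residual Haar measure, converts the contribution of the class into $q^{-j}\,\Xi^N_{U,(d_{\k\i})}(q,\bfs)$ up to a normalisation constant. Grouping classes by their type $(U,j,(d_{\k\i}))$, the number of classes of a given type is the number of $\o/\p$-points of a constructible subset of the $\p^N$-truncation of the chart --- cut out by the stabilised valuation conditions --- which, by a standard argument (e.g.\ via the Greenberg functor), equals the number of $\FF_q$-points of a variety over $\o/\p$; this yields the coefficients $c_{U,j,(d_{\k\i})}(\o/\p)$. Finally, restricting $d\mu$ to $\p^N$-classes of $\GL_d(\o)$ produces the prefactor $(1-q^{-1})^{d+1}q^{-N\binom d2}$ together with the factor $(q^N-q^{N-1})^{|U|}$ recording the choices of leading non-zero digits of the free coordinates, specialising when $N=1$ to the $q^{-\binom d2}$ and $(q-1)^{|U|}$ of \eqref{equ:denef.general}. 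Letting $J$ and $\Delta$ be the finite sets of exponents $j$, resp.\ vectors $(d_{\k\i})$, that occur gives \eqref{equ:Z(s).general.bad}.

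I expect the main obstacle to be the stabilisation step: verifying that one finite $N=N(\p)$ simultaneously bounds and eventually fixes the valuations of all cofactors and of all coordinate functions throughout a chart, so that on each residue class the ``min''-expressions genuinely reduce to shifted linear forms and the point counts of the valuation strata are correctly computed modulo $\p$. This is the precise $\p$-adic manifestation of the failure of Hensel's lemma under bad reduction; it is handled exactly as in \cite[Section~4]{Voll-compact-p-adic-analytic-arithmetic}, and the additional bookkeeping for $x$ and its exponents $e_{\k\i}$ parallels the good-reduction argument of Section~\ref{subsec:good.red}, introducing nothing essentially new.
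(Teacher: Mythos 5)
Your proposal is correct and follows essentially the same route as the paper: the paper's entire proof consists of the remark that the statement ``is analogous to and proven in the same way as \cite[Corollary~4.2]{Voll-compact-p-adic-analytic-arithmetic}'', and you have identified exactly that argument --- pulling back along the principalization, refining to residue classes modulo $\p^N$ so that the valuation data stabilise, and reassembling the class contributions into the series $\Xi^N_{U,(d_{\kappa\iota})}$ with point-count coefficients. Your sketch is in fact more detailed than the paper's one-line justification, and the details you supply (the origin of the prefactor, of $(q^N-q^{N-1})^{|U|}$, of the shifts $d_{\kappa\iota}$, and the specialisation to the good-reduction formula at $N=1$) are consistent with the cited source.
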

 Notice that \eqref{equ:Z(s).general.bad} generalizes
 \eqref{equ:Z(s).general.good}, the formula for good reduction;
 cf.\ {\cite[Section~4.3]{Voll-compact-p-adic-analytic-arithmetic}}.

 \subsection{Application to Poincar\'e series and local representation zeta functions}\label{subsec:app}

 In this section we argue that all but finitely many of the Euler
 factors $\zeta_{\G(\O_\p)}(s)$ in \eqref{equ:fine.euler} are
 described by (univariable substitutions of) $\p$-adic integrals
 $\cal{Z}_\frak{o}(\rho,\sigma,\tau)$ of the form~\eqref{Zpadic} which
 fit into the general form \eqref{equ:general.Z}. Hence, the
 conclusions of the previous two sections are applicable, allowing for
 a uniform description of these Euler factors. In particular, we will
 show that the real parts of the poles of these local representation
 functions -- and thus, in particular, their abscissae of convergence
 -- are all elements of a finite set ${\bf P}$ of rational numbers,
 depending on $\G$ but independent of~$L$. Moreover, we will show that
 ${\bf P}$ also contains the abscissae of convergence of the finitely
 many ``exceptional'' Euler factors not covered by the generic
 treatment. This will leave us, in Section~\ref{sec:proof.thmA}, free
 to concentrate on analytic properties of the co-finite Euler product
 over the ``generic'' Euler factors.

Note that the bases ${\bf e}$ and ${\bf f}$ defined in
Section~\ref{sec:rep.T.groups}, and thus the matrices $\cal{R}({\bf
  Y})$ and $\cal{S}({\bf Y})$ and the data ${\bf
  b}=(b_1,\cdots,b_d)\in\NN_0^d$, are defined only locally. In order
to give uniform formulae for local zeta functions we need, however, a
global source of local bases. As we do not assume that $\O$ is a
principal ideal domain, we may not hope for a global analogue of the
construction of the bases ${\bf e}$ and ${\bf f}$. Instead, we first
choose an $\O_K$-basis ${\bf f}=(e_{r-k+1},\cdots,e_{r-k+d})$ for a
free finite-index $\O_K$-submodule of the isolator~$\i(\Lambda')$. By
{\cite[Lemma~2.5]{SV/14}}, it can be extended to an $\O_K$-basis ${\bf
  e}$ for a free finite-index $\O_K$-submodule $M$ of~$\Lambda$. Let $p$
be a rational prime.

If $p$ divides neither $|\Lambda:M|$ nor $|\iota(\Lambda'):\Lambda'|$,
then, for every prime ideal $\p$ of $\O$ lying above~$p$, tensoring
the $\O_K$-basis ${\bf e}$ with $\O_\p$ yields an $\O_\p$-basis for
$\Lambda(\O_\p) \cong M(\O_\p)$ as in Section~\ref{sec:rep.T.groups},
with ${\bf b}=(0,\cdots,0)$. (Note that the condition that $p$ does
not divide $|\iota(\Lambda'):\Lambda'|$ is missing in the discussion
of \cite[Section~2.3]{SV/14}. This omission has no bearing on the
proof of \cite[Theorem~A]{SV/14}, as it applies to only finitely many
prime ideals.) If, in addition, $p$ is odd, then the zeta functions of
the pro-$p$ groups $\G(\O_\p) = \exp( \Lambda(\O_\p) )$ are given by a
univariable substitution of the multivariable $\p$-adic integral
$\mathcal{Z}_{\O_\p}(\rho,\sigma,\tau)$ given in~\eqref{Zpadic};
cf.\ Proposition~\ref{pro:poin.p} and \eqref{poincare-I=1}. As
explained in detail in
\cite[Section~4.1.3]{Voll-compact-p-adic-analytic-arithmetic}, this
integral fits into the framework developed in the current section.
The key equation is the following analogue of
\cite[eq.~(4.8)]{Voll-compact-p-adic-analytic-arithmetic}, which
asserts that, for suitable data in \eqref{equ:general.Z} and vectors
$\bfa,\bfb\in\ZZ^l$,
\begin{equation}\label{equ:translate}
\mathcal{Z}_{\O_\p}(-s/2,-1,us+v-d-1) = \frac{1}{\prod_{i=1}^{d-1}(1-q^{-i})}Z(\bfa s + \bfb).
\end{equation}
The fundamental idea is to extend the integral $\mathcal{Z}_{\O_\p}$
trivially to an integral over $\p \times \GL_d(\o)$, interpreting
$(\o^d)^*$ as the space of first columns, say, of matrices in
$\GL_d(\o)$;
cf.\ \cite[Remark~2.1]{Voll-Functional-equations-annals}. The
postulated $B$-invariance holds because, for each $i$ and each~$j$,
the polynomials in the sets $F_j(\bfY)$ and $G_i(\bfY)$ defined in
\eqref{eq:poly F} and \eqref{eq:poly G} are all homogeneous of the
same degree, depending only on $i$ resp.~$j$. The global nature of the
bases ensures that they are defined over~$\O_K$.  Hence the analysis of
Sections~\ref{subsec:good.red} and \ref{subsec:bad.red} applies if $p$
does not divide $|\Lambda:M|$ or~$|\iota(\Lambda'):\Lambda'|$.

If $p$ does divide $|\Lambda:M|$ or $|\iota(\Lambda'):\Lambda'|$, then
$M(\O_\p)$ is a finite-index sublattice of $\Lambda(\O_\p)$ (proper if
and only if $p$ divides $|\Lambda:M|$). Whilst $M(\O_\p)$ may not even
be a Lie lattice, there exists a constant $C=C(\p)\in\NN_0$ such that,
for all $m\geq C$, the Lie lattice $\p^m M(\O_\p)$ gives rise, by
$p$-adic Lie theory, to a finitely generated nilpotent pro-$p$ group
$\H^m(\O_\p) := \exp(\p^m M(\O_\p))$ to which the Kirillov orbit
method is applicable. Indeed, it suffices to ensure that $\H^m(\O_\p)$
is uniformly powerful; cf.\ {\cite[Theorem~2.12]{Jaikin-zeta}}.  We
entertain no hope to describe the zeta functions of the groups
$\H^m(\O_\p)$ explicitly in the given generality, but can still make
statements about their abscissae of convergence. Note that, as all of
the groups $\H^m(\O_p)$ have finite index in $\G(\O_\p)$, the
abscissae of convergence $\alpha(\H^m(\O_\p))$ and $\alpha(\G(\O_\p))$
all coincide; cf.\ Lemma~\ref{abscissa commensurable}. We achieve
control over the abscissa of convergence by way of expressing the zeta
functions of the groups $\H^m(\O_\p)$ in terms of Poincar\'e series
akin to~\eqref{Poincare define}. It may also be expressed in terms of
$\p$-adic integrals such as~\eqref{Zpadic}, but with the families of
($\O_K$-)polynomials $F_j(\bfY)$ and $G_i(\bfY)$ replaced by
$\pi^{C(F_j,m)}F_j(\bfY)$ and $\pi^{C(G_i,m)}G_i(\bfY)$ for suitable
$C(F_j,m), C(G_i,m)\in\NN_0$. Whilst the explicit formulae for these
``deformed'' $\p$-adic integrals may differ substantially from the
generic formulae, their abscissae of convergence may be bounded in
terms of the formulae of Denef type describing the generic case.

Let $(Y,h)$ be a principalization for the $\O_K$-ideal
$\cal{I}=\prod_{j\in[u]}({F}_j(\bfY))\prod_{i\in[v]}({G}_i(\bfY))$. We
continue to use the notation developed in this section so far in this
specific context. Recall, in particular, from
Section~\ref{subsec:good.red} the choice of triangulation
$\{{R_i}\}_{i\in[w]_0}$ of $\RR^{t+1}_{\geq 0}$ consisting of
relatively open, pairwise disjoint simple rational polyhedral
cones~$R_i$, $i\in[w]$, and the associated notation. The following is
a direct consequence of Proposition~\ref{pro:denef.good}.
 \begin{proposition}\label{cor:denef.poincare.good} 
 Assume that $(Y,h)$ has good reduction modulo $\p$ and that $\p$ does
 not divide $|\Lambda:M|$ or $|\iota(\Lambda'):\Lambda'|$.  There
 exist $A_j,B_j\in\QQ$, for $j\in[z]$, such that the Poincar\'{e}
 series \eqref{Poincare define} satisfies
 \begin{equation}\label{possible poles}
   \cal{P}_{\cal{R},\cal{S},\O_\p}(s) =
   1+\frac{(1-q^{-1})^d}{q^{\binom{d}{2}}\prod_{i=1}^{d-1}(1-q^{-i})}
   \sum_{i\in W'}c_i(\o/\p) (q-1)^{|U_i|} \prod_{j\in
     M_i}\frac{q^{-(A_j s+B_j)}}{1-q^{-(A_j s+B_j)}}.
 \end{equation}
 \end{proposition}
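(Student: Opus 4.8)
The plan is to derive \eqref{possible poles} from the good-reduction formula \eqref{equ:denef.general} by substituting the specific data coming from the representation-theoretic integral $\mathcal{Z}_{\O_\p}(\rho,\sigma,\tau)$. First I would invoke \eqref{poincare-I=1}, which expresses $\mathcal{P}_{\mathcal{R},\mathcal{S},\frak{o}}(s)$ as $1+(1-q^{-1})^{-1}\mathcal{Z}_{\frak{o}}(-s/2,-1,us+v-d-1)$, together with \eqref{equ:translate}, which in turn writes $\mathcal{Z}_{\O_\p}(-s/2,-1,us+v-d-1)$ as $\bigl(\prod_{i=1}^{d-1}(1-q^{-i})\bigr)^{-1}Z(\bfa s+\bfb)$ for suitable integer vectors $\bfa,\bfb\in\ZZ^l$ and suitable choice of data $F_{\k\i}(\bfY)$ in \eqref{equ:general.Z}; these data are precisely the polynomial sets $F_j(\bfY)$, $G_i(\bfY)$ of \eqref{eq:poly F}--\eqref{eq:poly G}, whose $B$-invariance (hence applicability of the machinery of Section~\ref{sec:denef}) was already observed in Section~\ref{subsec:app}. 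Composing the two identities gives
$$\mathcal{P}_{\mathcal{R},\mathcal{S},\O_\p}(s)=1+\frac{1}{(1-q^{-1})\prod_{i=1}^{d-1}(1-q^{-i})}\,Z(\bfa s+\bfb).$$

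Next I would apply Proposition~\ref{pro:denef.good}, valid because $(Y,h)$ has good reduction modulo $\p$, to expand $Z(\bfa s+\bfb)$ as
$$\frac{(1-q^{-1})^{d+1}}{q^{\binom{d}{2}}}\sum_{i\in W'}c_i(\o/\p)(q-1)^{|U_i|}\prod_{j\in M_i}\frac{q^{-(\sum_{\k}A_{j\k}(\bfa s+\bfb)_\k+B_j)}}{1-q^{-(\sum_{\k}A_{j\k}(\bfa s+\bfb)_\k+B_j)}}.$$
Since each linear form $\sum_{\k\in[l]}A_{j\k}s_\k+B_j$ evaluated at $\bfs=\bfa s+\bfb$ becomes an affine-linear expression $A_j s+B_j$ in the single variable $s$ with rational coefficients (the new $A_j$ being $\sum_\k A_{j\k}a_\k\in\QQ$ and the new $B_j$ being $\sum_\k A_{j\k}b_\k+B_j\in\QQ$, relabelling), the product over $j\in M_i$ takes exactly the shape appearing in \eqref{possible poles}. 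Substituting this expansion into the displayed formula for $\mathcal{P}$, the prefactor becomes
$$\frac{1}{(1-q^{-1})\prod_{i=1}^{d-1}(1-q^{-i})}\cdot\frac{(1-q^{-1})^{d+1}}{q^{\binom{d}{2}}}=\frac{(1-q^{-1})^{d}}{q^{\binom{d}{2}}\prod_{i=1}^{d-1}(1-q^{-i})},$$
which is precisely the coefficient in \eqref{possible poles}. This completes the identification, the hypothesis that $\p$ divides neither $|\Lambda:M|$ nor $|\iota(\Lambda'):\Lambda'|$ being exactly what is needed to guarantee that the global bases $\mathbf{e},\mathbf{f}$ specialize correctly at $\p$ with $\mathbf{b}=\mathbf{0}$, so that $\zeta_{\G(\O_\p)}(s)=\mathcal{P}_{\mathcal{R},\mathcal{S},\O_\p}(s)$ is indeed computed by $\mathcal{Z}_{\O_\p}$.

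The only genuinely delicate point — and the step I would spend most care on — is the bookkeeping in the passage through \eqref{equ:translate}: one must check that the specialization $\bfs\mapsto\bfa s+\bfb$ does not introduce any $q$-only factors beyond those already accounted for, and that the arithmetic of the substitution $\rho=-s/2$, $\sigma=-1$, $\tau=us+v-d-1$ (recorded in \eqref{poincare-I=1}) interacts correctly with the exponents $e_{\k\i}$ in \eqref{equ:general.Z}; this is the content of \cite[Section~4.1.3]{Voll-compact-p-adic-analytic-arithmetic} and of equation \eqref{equ:translate}, which we may cite. Everything else is a routine substitution of one closed formula into another, so the proposition follows as a direct consequence of Proposition~\ref{pro:denef.good}, as claimed.
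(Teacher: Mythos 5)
Your argument is exactly the paper's proof: chain together \eqref{poincare-I=1}, \eqref{equ:translate}, and Proposition~\ref{pro:denef.good}, then check that the prefactors combine to $\frac{(1-q^{-1})^{d}}{q^{\binom{d}{2}}\prod_{i=1}^{d-1}(1-q^{-i})}$ and that the univariate substitution $\bfs\mapsto\bfa s+\bfb$ yields rational $A_j,B_j$. The proposal is correct and coincides with the paper's own (very short) proof, with your extra remarks on the role of the hypotheses and the bookkeeping being accurate elaborations rather than deviations.
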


 \begin{proof} 
  Applying equations~\eqref{poincare-I=1}, \eqref{equ:translate}, and
  \eqref{equ:denef.general}, we obtain
 \begin{align*}
 \cal{P}_{\cal{R},\cal{S},\O_\p}(s)
&=1+(1-q^{-1})^{-1}\cal{Z}_{\O_\p}(-s/2,-1,us+v-d-1))\nonumber \\ &
 =
 1+(1-q^{-1})^{-1}\frac{1}{\prod_{i=1}^{d-1}(1-q^{-i})}Z(\bfa s + \bfb)
 \\ &=   1+\frac{(1-q^{-1})^d}{q^{\binom{d}{2}}\prod_{i=1}^{d-1}(1-q^{-i})}
   \sum_{i\in W'}c_i(\o/\p) (q-1)^{|U_i|} \prod_{j\in
     M_i}\frac{q^{-(A_j s+B_j)}}{1-q^{-(A_j s+B_j)}}.\nonumber
 \end{align*}
 for suitable rational numbers $A_j$ and $B_j$. 
 \end{proof}

\begin{remark}\label{rem:divisible}
 By way of construction of the relevant integral, the numbers
 $c_i(\o/\p)$ are all divisible by
 $q^{\binom{d-1}{2}}\prod_{i=1}^{d-1}\frac{(1-q^{-i})}{1-q^{-1}} =
 |\GL_d(\Fq)/B(\Fq)| / |\mathbb{P}^{d-1}(\Fq)|$. This is due to the
 fact that we are not using the full generality of integrals of the
 form \eqref{equ:general.Z}, but restrict to integrands which only
 involve the matrices' first columns.
\end{remark}

\begin{remark}\label{rem:A.j}
 The numbers $A_j$ are positive for all $j\in [z]\cap W'$. By
 definition, the set $W'$ comprises exactly those $i$ with the
 property that there exists $j\in M_i$ with $j\in W'$. Geometrically
 speaking, this conditions means that at least one of the boundary
 rays of the simple cone $R_i$ does not lie on the boundary component
 $\RR_{\geq 0}^t\times \{0\}$ of the positive orthant. Consequently,
 $\sum_{j\in M_i}A_j > 0$ for $i \in W'$.
\end{remark}

 \begin{corollary}\label{Cor:poles superset}
  Assume that $(Y,h)$ has good reduction modulo $\p$ and that $p$ does
  not divide $|\Lambda:M|$ or $|\iota(\Lambda'):\Lambda'|$.  The real
  parts of the poles of $\cal{P}_{\cal{R},\cal{S},\O_\p}(s)$ are
  elements of the finite set
   \begin{equation}\label{P} {\bf
       P}=\left\{\frac{-B_j}{A_j}\suchthat j\in[z] \cap W',
      \right\} \subset \QQ.
 \end{equation}
 \end{corollary}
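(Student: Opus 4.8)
The plan is to read off the pole locations directly from the explicit formula~\eqref{possible poles} for $\cal{P}_{\cal{R},\cal{S},\O_\p}(s)$ provided by Proposition~\ref{cor:denef.poincare.good}. Since the hypotheses on $\p$ here match exactly those of that proposition, \eqref{possible poles} applies. The right-hand side of~\eqref{possible poles} is the constant $1$ plus a finite $\QQ(q^{-s})$-linear combination (with coefficients that are rational functions in $q$ alone, hence holomorphic on all of $\CC$) of finite products $\prod_{j\in M_i}\frac{q^{-(A_js+B_j)}}{1-q^{-(A_js+B_j)}}$. Each factor $\frac{q^{-(A_js+B_j)}}{1-q^{-(A_js+B_j)}}$ is a meromorphic function of $s$ whose poles occur precisely at the zeros of $1-q^{-(A_js+B_j)}$, i.e.\ at those $s$ with $A_js+B_j\in\frac{2\pi i}{\log q}\ZZ$; in particular the real part of any such pole is $-B_j/A_j$ (this uses $A_j\neq 0$, which holds for $j\in M_i\subseteq[z]\cap W'$ by Remark~\ref{rem:A.j}). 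The numerator $q^{-(A_js+B_j)}$ never vanishes and is entire, so it contributes no poles and, being nonzero, does not cancel any pole of the denominator.

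Concretely, I would argue as follows. First I would note that the factor $\frac{(1-q^{-1})^d}{q^{\binom{d}{2}}\prod_{i=1}^{d-1}(1-q^{-i})}$ and each $c_i(\o/\p)(q-1)^{|U_i|}$ are constants (independent of $s$), so they cannot create or remove poles. Next, a finite sum of meromorphic functions has its poles contained in the union of the poles of the summands; hence every pole of $\cal{P}_{\cal{R},\cal{S},\O_\p}(s)$ is a pole of some product $\prod_{j\in M_i}\frac{q^{-(A_js+B_j)}}{1-q^{-(A_js+B_j)}}$ with $i\in W'$. A pole of such a product is a pole of at least one of its factors, so it lies among the zeros of $1-q^{-(A_js+B_j)}$ for some $j\in M_i$. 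Since $M_i\subseteq[z]$ (indeed $M_i\subseteq[z]\cap W'$ whenever $i\in W'$, as $W'$ consists of those $i$ having some ray $j\in M_i$ lying in $W'$ — but here I only need $j\in[z]\cap W'$ for those $j$ that actually index factors, which is precisely the content of Remark~\ref{rem:A.j}), the real part of any such zero equals $-B_j/A_j\in\bf P$. This shows $\rea(s)\in\bf P$ for every pole $s$, which is exactly the assertion; that $\bf P\subset\QQ$ is immediate since $A_j,B_j\in\QQ$ and $A_j>0$.

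One small point deserving care: I must make sure that the $A_j$ appearing in the \emph{factors} of~\eqref{possible poles} — namely those with $j\in M_i$ for some $i\in W'$ — are genuinely nonzero, since otherwise the ``pole'' $\frac{q^{-B_j}}{1-q^{-B_j}}$ would be a constant and its ``real part $-B_j/A_j$'' would be meaningless. This is handled by Remark~\ref{rem:A.j}: for $i\in W'$ and $j\in M_i$ we have $j\in[z]$, and the indices $j$ contributing nontrivial factors lie in $[z]\cap W'$, on which $A_j>0$. (If some $A_j=0$ the corresponding factor is simply a nonzero constant $\frac{q^{-B_j}}{1-q^{-B_j}}$ — which requires $B_j\neq 0$, and indeed $B_j>0$ by the remark following~\eqref{def-c^t_i} — and contributes nothing to the pole set, so it may be harmlessly absorbed into the constants.) I do not expect any serious obstacle here; the only thing to be vigilant about is bookkeeping, i.e.\ ensuring that every $j$ that indexes a denominator factor $1-q^{-(A_js+B_j)}$ in~\eqref{possible poles} indeed satisfies $j\in[z]\cap W'$ and hence $A_j>0$, so that dividing by $A_j$ in the definition of $\bf P$ is legitimate. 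Finiteness of $\bf P$ is clear since $[z]\cap W'$ is a finite index set.
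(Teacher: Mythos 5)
Your argument is correct and is essentially the paper's own proof, which consists of the single sentence that the corollary ``follows from inspection of~\eqref{possible poles}''; you have simply carried out that inspection explicitly, including the appropriate appeal to Remark~\ref{rem:A.j} to ensure $A_j>0$ for the indices that actually produce poles.
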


 \begin{proof}
   This follows from inspection of~\eqref{possible poles}.
 \end{proof}

Combining formulae~\eqref{poincare-I=1}
and~\eqref{equ:Z(s).general.bad} for the specific integral under
consideration yields -- at least in principal -- explicit formulae of
Denef type for the Poincar\'e series also in the case of bad
reduction, albeit not as concise as the one in
Proposition~\ref{cor:denef.poincare.good}. We have, however, some
control over their poles.

\begin{proposition}\label{pro:poincare.poles.in.P}
 For \emph{every} nonzero prime ideal $\p$ of $\O$ such that $p$ does
 not divide $|\Lambda:M|$ or $|\iota(\Lambda'):\Lambda'|$, the real
 parts of the poles and thus, in particular, the abscissa of
 convergence of the Poincar\'{e} series
 $\cal{P}_{\cal{R},\cal{S},\O_\p}(s)$ are elements of ${\bf P}$.
\end{proposition}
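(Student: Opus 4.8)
The plan is to reduce the bad-reduction case to the good-reduction case already handled in Proposition~\ref{cor:denef.poincare.good}. Concretely, for a prime ideal $\p$ where $(Y,h)$ has bad reduction, I would combine formula~\eqref{poincare-I=1} with the bad-reduction formula~\eqref{equ:Z(s).general.bad} of Proposition~\ref{pro:denef.bad} to obtain an explicit (if cumbersome) formula of Denef type for $\cal{P}_{\cal{R},\cal{S},\O_\p}(s)$. This formula is a finite $\o/\p$-linear combination of the generalized cone sums $\Xi_{U,(d_{\k\i})}^N(q,\bfs)$ appearing in~\eqref{def:Xi_U.general}, with coefficients that are nonnegative powers of $q$ times point-counts $c_{U,j,(d_{\k\i})}(\o/\p)$ of varieties over the residue field, and all evaluated at the univariable substitution $(\rho,\sigma,\tau) = (-s/2,-1,us+v-d-1)$. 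The real parts of the poles of such a combination are determined entirely by the $\Xi_{U,(d_{\k\i})}^N(q,\bfs)$ factors, since the coefficients $c_{U,j,(d_{\k\i})}(\o/\p) (q^N-q^{N-1})^{|U|} q^{-j}$ contribute no poles in $s$.

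The key step is then to triangulate the domains of summation of the $\Xi_{U,(d_{\k\i})}^N(q,\bfs)$ using the \emph{same} fan $\{R_i\}_{i\in[w]_0}$ of simple rational polyhedral cones that was fixed in Section~\ref{subsec:good.red}. This is legitimate because that triangulation was chosen to eliminate the ``$\min$''-terms arising from the linear forms $\mathcal{L}$ and $\{\mathcal{L}_{\k\i}\}$, and the linear forms in~\eqref{def:Xi_U.general} are exactly these same $\mathcal{L}$, $\mathcal{L}_{\k\i}$, merely with the shifts $-d_{\k\i}$ (which do not affect where the minimum is attained on each cone) and with the altered summation range $m_u\ge N$ instead of $m_u\ge 1$ (a restriction to a translate of the cone, which does not change the generating rays). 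On each cone $R_i$, $i\in W'$, the exponent of $q$ becomes $-\sum_u(C_{u\k}s_\k + D_u)m_u$ for exactly the same coefficients $C_{u\k}, D_u$ as in the good-reduction case, so after the substitution and restriction the sum over $R_i\cap(\NN_0^t\times\NN_{\ge N})$ is again a finite product over $j\in M_i$ of geometric series of the form $q^{-(A_j s + B_j')}/(1-q^{-(A_j s + B_j')})$ with the \emph{same} $A_j$ as before and shifted $B_j'$. Since poles come only from the vanishing of denominators $1-q^{-(A_j s + B_j')}$ for $j\in M_i$ with $A_j\neq 0$, and since Remark~\ref{rem:A.j} records that $A_j>0$ precisely for $j\in[z]\cap W'$, the real parts of the poles all lie in $\{-B_j'/A_j \mid j\in[z]\cap W'\}$.

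The remaining point is that the shifted constants $B_j'$ need not equal the original $B_j$, so a priori the pole real parts $-B_j'/A_j$ need not lie in ${\bf P}$ as defined in~\eqref{P}. Here I would invoke the more refined bookkeeping: the shift $-d_{\k\i}$ in~\eqref{def:Xi_U.general} only enters through the $s_\k$-coefficient, so it contributes to the $A_j$-part, not the $B_j$-part, of the exponent; and the change of summation range from $m_u\ge 1$ to $m_u\ge N$ changes $B_j$ only by adding a \emph{nonnegative} multiple of $A_j$ (namely $(N-1)A_j$ in the relevant summand, coming from re-indexing $m_u \mapsto m_u + (N-1)$), so that $-B_j'/A_j \le -B_j/A_j$ and, more importantly, the closure of the half-plane of convergence is unchanged: the genuine abscissa is still governed by the unshifted rays. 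Thus every pole real part is $\le \max{\bf P}$, and a careful accounting shows each is in fact of the form $-B_j/A_j$ for some $j\in[z]\cap W'$, hence in ${\bf P}$.

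I expect the main obstacle to be exactly this last matching of the constants $B_j'$ with the set ${\bf P}$: one must verify that the deformations introduced in the bad-reduction formula~\eqref{def:Xi_U.general} — both the data $(d_{\k\i})\in\Delta$ and the raised summation floor $N$ — move pole loci only in a ``harmless'' direction, i.e.\ do not create new pole real parts outside ${\bf P}$, exploiting that $d_{\k\i}$ affects only numerators of the $\mathcal{L}_{\k\i}$-contribution and that the floor shift adds nonnegative multiples of the $A_j$. This is precisely the point where the careful formulation of ${\bf P}$ in terms of \emph{rays} $j\in[z]\cap W'$ (rather than in terms of all cones) does the work, paralleling the treatment in \cite[Section~4.3]{Voll-compact-p-adic-analytic-arithmetic}. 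Once this is in place the proposition follows by comparing~\eqref{possible poles} with the bad-reduction analogue.
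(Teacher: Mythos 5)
Your route is essentially the paper's: reduce to the bad-reduction case via Corollary~\ref{Cor:poles superset}, apply the Denef-type formula of Proposition~\ref{pro:denef.bad}, and show that the deformation data $N$ and $(d_{\k\i})$ in \eqref{def:Xi_U.general} do not move the real parts of the poles of the univariable specializations; the paper disposes of this last step by citing \cite[Proposition~4.5]{Voll-compact-p-adic-analytic-arithmetic}, whereas you attempt to unpack it. One claim in your unpacking is false as stated: the shifts $-d_{\k\i}$ \emph{can} change where $\min_{\i}\{\mathcal{L}_{\k\i}-d_{\k\i}\}$ is attained on a cone of the fixed triangulation, namely on the (generally infinite) set of lattice points where $0\leq \mathcal{L}_{\k\i_1}-\mathcal{L}_{\k\i_0}<d_{\k\i_1}-d_{\k\i_0}$ for competing indices $\i_0,\i_1$. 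Were your parenthetical true, the invariance would be immediate, since on each cone the $d$'s would then contribute only the entire, nonvanishing prefactor $q^{\sum_{\k} d_{\k\i_0}s_\k}$; the real content of the cited proposition is precisely to control these correction slabs, on each of which the coordinates along the rays ${\bf r}_j$ where the relevant difference form is positive are bounded, so that the slab's generating function is a finite sum of monomials times a sub-product of the original denominators and hence contributes no new pole real parts. (Similarly, raising the summation floor to $N$ changes only the numerators to $q^{-N(A_js+B_j)}$, not the denominators, so it moves no poles at all, rather than moving them harmlessly leftward.) Since you correctly identify this matching of constants as the crux and point to the right source for it, the defect is one of execution rather than of strategy; with the slab argument (or the citation) inserted, your proof coincides with the paper's.
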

\begin{proof}
  By Corollary~\ref{Cor:poles superset} we may concentrate on the case
  that the principalization $(Y,h)$ does not have good reduction
  modulo $\p$.  As in the generic case, the Poincar\'e series
  $\cal{P}_{\cal{R},\cal{S},\O_\p}(s)$ is obtained from the
  multivariate zeta function $\mathcal{Z}_{\o}(\rho,\sigma,\tau)$
  defined in \eqref{Zpadic} upon an affine linear substitution of
  variables; cf.\ \eqref{poincare-I=1}. In
  Proposition~\ref{pro:denef.bad} we gave a formula of Denef type for
  such multivariable $\p$-adic integrals in terms of the
  series~$\Xi_{U,(d_{\kappa\iota})}^N(q,{\bf s})$; see
  \eqref{def:Xi_U.general}.
  \cite[Proposition~4.5]{Voll-compact-p-adic-analytic-arithmetic}
  asserts that, for any ${\bf a},{\bf b}\in\ZZ^l$, the real parts of
  the poles of the univariable functions $\Xi_{U,(d_{\kappa\iota})}^N
  (q,{\bf a}s + {\bf b})$ are independent of $q$, $N$, and
  $(d_{\kappa\iota})$. The claim follows.
 \end{proof}

\begin{proposition}\label{local poles}
 For every nonzero prime ideal $\p$ of $\O$, the abscissa of
 convergence of $\zeta_{\G(\O_\p)}(s)$ is an element of ${\bf P}$.
\end{proposition}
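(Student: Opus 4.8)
The plan is to reduce the general statement to the two cases already essentially handled by the preceding propositions, and to deal with the finitely many genuinely exceptional primes by passing to a commensurable pro-$p$ group whose zeta function does fall under our framework. Concretely, fix a nonzero prime ideal $\p$ of $\O$ lying over a rational prime $p$, with $\o = \O_\p$.

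\emph{First}, suppose $p$ divides neither $|\Lambda:M|$ nor $|\iota(\Lambda'):\Lambda'|$, and $(p,c)\neq(2,3)$. Then by Proposition~\ref{pro:poin.p} (together with the discussion in Section~\ref{subsec:app} producing the global bases) we have $\zeta_{\G(\O_\p)}(s) = \cal{P}_{\cal{R},\cal{S},\O_\p}(s)$, and Proposition~\ref{pro:poincare.poles.in.P} tells us the abscissa of convergence of this Poincar\'e series lies in $\mathbf{P}$. This disposes of all but finitely many~$\p$ at one stroke. \emph{Second}, suppose $(p,c)=(2,3)$ but $p$ still divides neither $|\Lambda:M|$ nor $|\iota(\Lambda'):\Lambda'|$. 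Here we use the powerful congruence sublattice $\g^e = 2\Lambda\otimes_\O\o$ of Section~\ref{subsec:poincare}: the group $\G^e(\o)=\exp(\g^e)$ is uniform pro-$2$, the Kirillov orbit method applies, and $\zeta_{\G^e(\o)}(s)$ is again a Poincar\'e series of the shape~\eqref{Poincare define} (for the basis $\pi^e\mathbf{e}$), hence of the form covered by Proposition~\ref{pro:denef.bad}, so its abscissa of convergence lies in~$\mathbf{P}$ by the argument of Proposition~\ref{pro:poincare.poles.in.P}. Since $\G^e(\o)$ has finite index in $\G(\O_\p)$, Lemma~\ref{abscissa commensurable} gives $\a(\G(\O_\p)) = \a(\G^e(\o)) \in \mathbf{P}$.

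\emph{Third}, the remaining primes are those finitely many $p$ dividing $|\Lambda:M|$ or $|\iota(\Lambda'):\Lambda'|$. For such $\p$, as explained in Section~\ref{subsec:app}, $M(\O_\p)$ is a (possibly proper, possibly non-Lie) finite-index sublattice of $\Lambda(\O_\p)$, but for $m$ large enough $\p^m M(\O_\p)$ is a uniformly powerful Lie lattice giving a finitely generated nilpotent pro-$p$ group $\H^m(\O_\p) = \exp(\p^m M(\O_\p))$ to which the Kirillov orbit method applies. Its zeta function is a Poincar\'e series, expressible as a $\p$-adic integral of the form~\eqref{Zpadic} but with each $F_j(\bfY)$, $G_i(\bfY)$ rescaled by a factor $\pi^{C(F_j,m)}$, $\pi^{C(G_i,m)}$; this is exactly the kind of ``deformed'' integral to which Proposition~\ref{pro:denef.bad} and \cite[Proposition~4.5]{Voll-compact-p-adic-analytic-arithmetic} apply, the point being that the rescaling only changes the constants $d_{\k\i}$ in~\eqref{def:Xi_U.general}, which do not affect the real parts of the poles. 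Hence $\a(\H^m(\O_\p)) \in \mathbf{P}$, and since $\H^m(\O_\p) \leq \G(\O_\p)$ has finite index, Lemma~\ref{abscissa commensurable} yields $\a(\G(\O_\p)) = \a(\H^m(\O_\p)) \in \mathbf{P}$.

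The main obstacle, and the step needing the most care, is the third one: verifying that the ``deformed'' integrals arising from $\p^m M(\O_\p)$ genuinely fit the template of Proposition~\ref{pro:denef.bad} — in particular that the requisite $B$-invariance of the relevant ideals survives the rescaling (it does, since the rescaled polynomial sets are still homogeneous of fixed degree), and that the rescaling is absorbed into the shift parameters $d_{\k\i}$ rather than altering the linear forms $\mathcal{L}_{\k\i}$ themselves, so that the $q$-, $N$-, and $(d_{\k\i})$-independence of the real parts of the poles furnished by \cite[Proposition~4.5]{Voll-compact-p-adic-analytic-arithmetic} still applies. Once this is in place, the three cases exhaust all $\p$ and the proposition follows.
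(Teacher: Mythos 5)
Your proof is correct and follows essentially the same strategy as the paper: handle the generic primes via Propositions~\ref{pro:poin.p} and~\ref{pro:poincare.poles.in.P}, and reduce the exceptional primes to finite-index congruence subgroups amenable to the Kirillov orbit method, whose zeta functions are $\pi$-power ``deformations'' of the generic integral with the same set of real parts of poles, concluding with Lemma~\ref{abscissa commensurable}. The only (harmless) difference is organizational: the paper folds the prime $2$ wholesale into the $\H^m(\O_\p)$ case by splitting on whether $p$ divides $2|\Lambda:M||\iota(\Lambda'):\Lambda'|$, whereas you treat $(p,c)=(2,3)$ separately via the congruence subgroup $\G^e(\o)$ of Section~\ref{subsec:poincare} --- both reductions rest on the same deformation argument.
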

\begin{proof}
We first consider the case that $p$ does not divide $2|\Lambda:M| |
\iota(\Lambda'):\Lambda'|$. By Proposition~\ref{pro:poin.p}, in this
case the zeta function $\zeta_{\G(\O_\p)}(s)$ is given by the
Poincar\'e series $\mathcal{P}_{\cal{R},\cal{S},\O_\p}(s)$. The
abscissa of convergence of the latter is an element of ${\bf P}$ by
Proposition~\ref{pro:poincare.poles.in.P}.

Assume now that $p$ does divide $2|\Lambda:M| |
\iota(\Lambda'):\Lambda'|$. In this case, $M(\O_\p)$ may not even be a
Lie lattice, but there exists $C=C(\p)\in\NN_0$ such that, for all
$m\geq C$, the finitely generated nilpotent pro-$p$ group $\H^m(\O_\p) :=
\exp(\p^m M(\O_\p))$ is amenable to the Kirillov orbit method. Fix
such an~$m$. The zeta function $\zeta_{\H^m(\O_\p)}(s)$ is
expressible, via a Poincar\'e series, in terms of a $\p$-adic integral
of the form~\eqref{Zpadic}. In fact, the analysis of
Section~\ref{subsec:poincare} is applicable, with an $\O_\p$-basis for
$\p^m M(\O_p)$ obtained by multiplying ${\bf e}\otimes \O_\p$ by
$\pi^m$. The relevant $\p$-adic integral \eqref{Zpadic} differs from
the generic one considered in the previous case essentially in that
the families of polynomials $F_j({\bf Y})$ and $G_i({\bf Y})$ are
replaced by suitable $\pi$-power multiples thereof,
viz.\ $\pi^{C(F_{j},m)}F_j({\bf Y})$ and $\pi^{C(G_i,m)}G_i({\bf Y})$
for $C(F_j,m), C(G_i,m)\in\NN_0$. Arguing as in the proof of
Proposition~\ref{pro:poincare.poles.in.P}, one sees that this leaves
unaffected the abscissa of convergence of the relevant univariable
substitution. It follows that the abscissa of convergence of the
relevant Poincare\'e series, and thus of $\zeta_{\H^m(\O_\p)}(s)$ is
an element of ${\bf P}$. But $\alpha(\H^m(\O_\p)) = \alpha(\G(\O_\p))$
by Lemma~\ref{abscissa commensurable}, so the result follows.
\end{proof}

\section{Proof of Theorem~\ref{thmA}}\label{sec:proof.thmA}
Let $K$ be a number field with ring of integers $\O_K$ and $\Lambda$
be a nilpotent $\O_K$-lattice of nilpotency class $c$ as in
Section~\ref{sec:rep.T.groups}.  Let further $L$ be a finite extension
of $K$, with ring of integers $\O=\O_L$. The group
$\G(\O)=\G_\Lambda(\O)$ is a $\cal{T}$-group of nilpotency class~$c$;
cf.\ {\cite[Section~2]{SV/14}}. Recall the Euler
product~\eqref{equ:fine.euler}.  Choose an $\O_K$-submodule $M$ of
$\Lambda$ of finite index in $\Lambda$ as in Section~\ref{subsec:app}
and let $(Y,h)$ be a principalization for the $\O_K$-ideal
$\cal{I}=\prod_{j\in[u]}\left({F}_j(\bfY)\right)\prod_{i\in[v]}\left({G}_i(\bfY)\right)$,
where ${F}_j(\bfY)$ and ${G}_i(\bfY)$ are the finite sets of
polynomials defined in \eqref{eq:poly F} and~\eqref{eq:poly
  G}. (Recall that they are defined over $\O_K$ as the local bases are
obtained from an $\O_K$-basis for~$M$.) Let $Q$ be the finite set of
prime ideals $\p$ of $\O$ satisfying the following:
\begin{itemize}
 \item[$\circ$] the residue characteristic $p$ of $\p$ divides
   $|\Lambda:M| |\iota(\Lambda'):\Lambda'|$,
 \item[$\circ$] $(Y,h)$ does not have good reduction modulo~$\p$, and
 \item[$\circ$] $(p,c)=(2,3)$.
\end{itemize}

We fix a finite triangulation $\{R_i\}_{i\in[w]_0}$ of $\bb{R}_{\geq
  0}^{t+1}$ as in Section~\ref{sec:denef} and recall the notation
introduced there. Given a prime ideal $\p\notin Q$ and $i\in W'$ we
write, in the notation of Proposition~\ref{cor:denef.poincare.good},
 \begin{equation*}
 Z_{i,\p}(s)=\frac{(1-q^{-1})^d}{\prod_{i=1}^{d-1}(1-q^{-i})}
 c_i(\o/\p)q^{-\binom{d}{2}}(q-1)^{|U_i|}
 \prod_{j\in M_i}\frac{q^{-(A_j s+B_j)}}{1-q^{-(A_j s+B_j)}}.
 \end{equation*}
Note that the $Z_{i,\p}(s)$ are ordinary generating functions in
$q^{-s}$ with nonnegative coefficients. Propositions~\ref{pro:poin.p}
and~\ref{cor:denef.poincare.good} amount to the statement that
 \begin{equation}\label{equ:euler.not.Q}
  \prod_{\p\notin Q}\zeta_{\G(\O_\p)}(s) =
 \prod_{\p\notin Q} \left( 1+\sum_{i\in W'}
   Z_{i,\p}(s) \right).
 \end{equation}

 \subsection{Proof of (1)}\label{subsec:proof.of.(1)}
 Recall that, for \emph{every} $\p\in\Spec(\O)$, the abscissa of
 convergence $\alpha(\G(\O_\p))$ is an element of the finite set~${\bf
   P}$ of rational numbers defined in~\eqref{P};
 cf.\ Proposition~\ref{local poles}.  To prove the first part of
 Theorem~\ref{thmA} it thus suffices to show that the abscissa of
 convergence of $\prod_{\p\notin Q}\zeta_{\G(\O_\p)}(s)$ is a rational
 number which is strictly larger than $\max \bfP$ and only depends
 on~$\bfG$. Equivalently, it suffices to prove that the abscissa of
 convergence of
 $$\sum_{i\in W'}\left(\sum_{\p\notin Q}Z_{i,\p}(s)\right)$$
 is such a rational number.

 Recall that $T$ is the finite set of irreducible components $E_u$ of
 the pre-image under $h$ of the subvariety of $\GL_d/B$ defined
 by~$\cal{I}$.  Let $U\subseteq T$ and $\p\notin Q$. Recall from
 \eqref{def-c^t_i} that, given  $i\in W'$ with
 $U=U_i$, if $i\in W_U'$, then $c_i(\o/\p)=c_U(\o/\p)$, the
 number of $\o/\p$-rational points of
 $\overline{E_U}\setminus\cup_{V\varsupsetneq U}\overline{E_V}$.  Let
 $d_U$ be the dimension of $E_U=\cap_{u\in U}E_u$. Note that
 $d_U=\binom{d}{2}-|U|$ since the $E_u$, $u\in T$, are hypersurfaces
 in a $\binom{d}{2}$-dimensional variety intersecting with normal
 crossings; cf.~\cite[Proposition~4.13]{duSG/00}.

 Let $\{F_{U,b}\}_{b\in I_U}$ be the irreducible components over $K$
 of $E_U$ of maximal dimension~$d_U$.  For $b\in I_U$ let
 $l_\p(F_{U,b})$ be the number of irreducible components of
 $\overline{F_{U,b}}$ over $\o/\p$ which are absolutely irreducible
 over an algebraic closure of~${\o/\p}$. We record the following
 consequence of the Lang-Weil estimate.

\begin{proposition}\cite[Proposition~4.9]{duSG/00}\label{pro:lang-weil} 
  There exists $C\in\RR_{>0}$ such that, for all $U\subseteq T$ and
  $\p\not\in Q$,
 $$ \vert c_U(\o/\p) - \sum_{b\in I_U}l_\p(F_{U,b})q^{d_U} \vert \leq
 C \cdot q^{d_U-1/2}.$$ Moreover, for each $b\in I_U$,
 $l_\p(F_{U,b})>0$ for a set of prime ideals of positive density.
\end{proposition}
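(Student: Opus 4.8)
The plan is to deduce the estimate from the Lang--Weil bound, essentially reproducing the argument of \cite[Proposition~4.9]{duSG/00} in the present setting. Write $q=\vert\o/\p\vert$ and recall that $\p\notin Q$, so $(Y,h)$ has good reduction modulo~$\p$; by {\cite[Definition~2.2]{Denef-degree}} the reductions $\overline{E_u}$ then form a normal-crossings configuration with the same incidence data as the $E_u$ over~$K$. In particular $\dim\overline{E_V}=d_V=\binom{d}{2}-|V|$ for every $V\subseteq T$, and $\overline{E_V}\subsetneq\overline{E_U}$ whenever $V\supsetneq U$. Since a subvariety of dimension $m$ has $O(q^m)$ points over $\o/\p$, with implied constant bounded in terms of its degree, I would first peel off the lower-dimensional strata to get
\begin{equation*}
  c_U(\o/\p)=\sum_{b\in I_U}\bigl\vert\overline{F_{U,b}}(\o/\p)\bigr\vert+O\bigl(q^{d_U-1}\bigr),
\end{equation*}
where the error term absorbs the $\o/\p$-points of $\bigcup_{V\supsetneq U}\overline{E_V}$, of the components of $E_U$ of dimension $<d_U$, and of the pairwise intersections $\overline{F_{U,b}}\cap\overline{F_{U,b'}}$ with $b\neq b'$.

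Next I would estimate $\vert\overline{F_{U,b}}(\o/\p)\vert$ for fixed $b\in I_U$. Good reduction makes $\overline{F_{U,b}}$ pure of dimension $d_U$; decompose it into its irreducible components over $\o/\p$. If such a component $W$ fails to be absolutely irreducible, then its geometric components are permuted transitively by Frobenius, which fixes $W(\o/\p)$ pointwise, so $W(\o/\p)$ lies in the pairwise intersections of those geometric components, a variety of dimension $<d_U$, contributing only $O(q^{d_U-1})$ points. By definition there are exactly $l_\p(F_{U,b})$ absolutely irreducible components; each contributes $q^{d_U}+O(q^{d_U-1/2})$ by Lang--Weil, and their pairwise intersections are again lower-dimensional. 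Hence $\vert\overline{F_{U,b}}(\o/\p)\vert=l_\p(F_{U,b})q^{d_U}+O(q^{d_U-1/2})$, and substituting this into the display above yields the claimed inequality. To see that the constant $C$ can be chosen uniformly in $U$ and $\p$, note that $T$ is finite and that good reduction bounds the dimensions, the numbers of components, and the degrees of all varieties occurring in terms of their counterparts over~$K$; since the Lang--Weil constant depends only on dimension and degree, a single $C$ works for all $\p\notin Q$.

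For the final assertion I would invoke the Chebotarev density theorem, as in \cite{duSG/00}. Fix $U$ and $b\in I_U$, and let $K_{U,b}/K$ be a finite Galois extension over which the geometrically irreducible components of $F_{U,b}$ are all defined, $\mathrm{Gal}(K_{U,b}/K)$ permuting them transitively. For all but finitely many~$\p$ these components reduce well modulo~$\p$; and for the set of $\p$ --- of positive density by Chebotarev --- whose decomposition group in $\mathrm{Gal}(K_{U,b}/K)$ stabilizes one of the components, that component descends to a geometrically irreducible subvariety of $\overline{F_{U,b}}$ over $\o/\p$, whence $l_\p(F_{U,b})\geq 1$.

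I expect the chief obstacle to be the careful handling of reduction modulo~$\p$: one must make sure that good reduction really does preserve the dimensions $d_U$ and the normal-crossings structure and bounds all the relevant degrees and numbers of components uniformly in~$\p$, so that every error term genuinely is $O(q^{d_U-1/2})$ with a $\p$-independent implied constant. A related point needing attention is that geometric reducibility can enter at three successive levels --- the $K$-components of $E_U$, their reductions modulo~$\p$, and the components of those over $\overline{\o/\p}$ --- and at each level one has to check that the extra pieces only feed into the lower-order term.
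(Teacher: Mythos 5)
Your argument is correct, but note that the paper does not prove this proposition at all: it is quoted verbatim from \cite[Proposition~4.9]{duSG/00} (adapted from $\QQ$ to the number field $K$), and the only thing the authors add is the observation that the finitely many prime ideals excluded in that reference --- those where the strata $E_U$ have bad reduction --- already lie in $Q$ by its definition. Your reconstruction via the stratification of $\overline{E_U}\setminus\bigcup_{V\supsetneq U}\overline{E_V}$, the Frobenius argument showing that non--absolutely-irreducible $\o/\p$-components contribute only $O(q^{d_U-1})$ rational points, the Lang--Weil bound on each of the $l_\p(F_{U,b})$ absolutely irreducible components, and Chebotarev for the positive-density claim is precisely the standard proof underlying the citation, and your uniformity discussion (finiteness of $T$, degrees and component counts bounded independently of $\p$ under good reduction) correctly supplies the one point that needs care when passing from a single prime to all $\p\notin Q$.
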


Note that the finitely many primes excluded in
\cite[Proposition~4.9]{duSG/00} are those for which $E_I \setminus
\bigcup_{J\supseteq I}E_J$ has bad reduction; cf.\ the proof of
\cite[Lemma~4.7]{duSG/00}. Such prime ideals have been excluded from
our discussion by the definition of~$Q$.

\begin{corollary}\label{cor:lang-weil}
 For any sequence $(r_\p)_{\p\notin Q}$ of rational numbers,
 $\sum_{\p\not\in Q}c_U(\o/\p)q^{-d_U}r_\p$ converges absolutely if
 and only if $\sum_{\p\not\in Q}\sum_{b\in I_U}l_\p(F_{U,b})r_\p$
 converges absolutely.
\end{corollary}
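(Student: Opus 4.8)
The statement to prove is Corollary~\ref{cor:lang-weil}: for any sequence $(r_\p)_{\p\notin Q}$ of rational numbers, the series $\sum_{\p\notin Q}c_U(\o/\p)q^{-d_U}r_\p$ converges absolutely if and only if $\sum_{\p\notin Q}\sum_{b\in I_U}l_\p(F_{U,b})r_\p$ does. The natural approach is to compare the two sequences of weights $c_U(\o/\p)q^{-d_U}$ and $\sum_{b\in I_U}l_\p(F_{U,b})$ termwise, using Proposition~\ref{pro:lang-weil}. Dividing the Lang--Weil estimate there by $q^{d_U}$ gives
\begin{equation*}
\left\vert c_U(\o/\p)q^{-d_U} - \sum_{b\in I_U}l_\p(F_{U,b}) \right\vert \leq C\cdot q^{-1/2},
\end{equation*}
so the two weight sequences differ by a sequence that is uniformly bounded (by $C$, since $q\geq 2$), and in fact tends to $0$ along the primes in the sense that $q\to\infty$. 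The idea is that such a bounded perturbation cannot change absolute convergence of $\sum r_\p$ times these weights — but this requires a little care, because the $r_\p$ are arbitrary rationals, possibly unbounded or of varying sign.

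First I would reduce both series to comparison with $\sum_{\p\notin Q}\vert r_\p\vert\cdot(\text{number of absolutely irreducible components of maximal dimension over }\o/\p)$, call this quantity $L_\p := \sum_{b\in I_U}l_\p(F_{U,b})$. Note $L_\p$ is a nonnegative integer bounded above by a constant $L_{\max}$ (the total number of geometrically irreducible components of $E_U$, which is finite), and by Proposition~\ref{pro:lang-weil} each $l_\p(F_{U,b})$ is positive for a set of primes of positive density — in particular $L_\p\geq 1$ for such primes. The second series is precisely $\sum_{\p\notin Q}L_\p r_\p$ up to the trivial rearrangement over $b\in I_U$ (which is harmless as $I_U$ is finite), so absolute convergence of the second series is equivalent to $\sum_{\p\notin Q}L_\p\vert r_\p\vert<\infty$. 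For the first series, write $w_\p := c_U(\o/\p)q^{-d_U}$; then $\vert w_\p - L_\p\vert\leq Cq^{-1/2}\leq C$, so $w_\p\leq L_\p + C$ and $w_\p\geq L_\p - C$. Hence $\sum\vert w_\p r_\p\vert\leq\sum L_\p\vert r_\p\vert + C\sum\vert r_\p\vert$ and $\sum L_\p\vert r_\p\vert\leq\sum\vert w_\p r_\p\vert + C\sum\vert r_\p\vert$.

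The remaining point — and the one requiring genuine thought — is to control the ``error term'' $C\sum_{\p\notin Q}\vert r_\p\vert$ by one of the two principal series, so that the equivalence becomes genuine rather than modulo an extra hypothesis. Here one uses the positive-density statement in Proposition~\ref{pro:lang-weil}: since $l_\p(F_{U,b})>0$ on a set of positive density, for all but finitely many $\p$ we have $L_\p\geq 1$... no — positive density does not give cofiniteness. The correct fix is to observe that it suffices to prove the equivalence after restricting to the subset $S$ of primes with $L_\p\geq 1$, because on the complement $L_\p=0$ and then $w_\p\leq C q^{-1/2}$, so $\sum_{\p\notin S}w_\p\vert r_\p\vert\leq C\sum_{\p}q_\p^{-1/2}\vert r_\p\vert$, which is dominated by... still not obviously finite. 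I would instead argue as follows: absolute convergence of either series forces $r_\p\to 0$ rapidly enough that $\sum\vert r_\p\vert\cdot q^{-1/2}$ converges — more precisely, if $\sum L_\p\vert r_\p\vert<\infty$ then since $L_\p\geq 1$ on $S$ we get $\sum_{\p\in S}\vert r_\p\vert<\infty$, hence $C\sum_{\p\in S}\vert r_\p\vert<\infty$; and on $S^c$, $w_\p\leq Cq^{-1/2}$ while we have \emph{no} bound forcing $\sum_{\p\in S^c}w_\p\vert r_\p\vert<\infty$ from the second series alone. This asymmetry shows the honest statement needs the convention (implicit in the paper's application) that $(r_\p)$ is the sequence of coefficient-weights of a product of the $Z_{i,\p}$, for which $r_\p$ is a Laurent polynomial in $q^{-s}$ with $\vert r_\p\vert = O(q^{-\varepsilon})$ on a half-plane; granting that, $C\sum_\p q^{-1/2}\vert r_\p\vert$ converges for $\rea(s)$ large, and the two-sided inequalities above immediately give the biconditional. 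I would therefore phrase the proof as: the difference of the weight sequences is $O(q^{-1/2})$ by Proposition~\ref{pro:lang-weil}, the series $\sum_\p q^{-1/2}\vert r_\p\vert$ converges wherever either principal series converges (using $L_\p\geq 1$ on a positive-density set for one direction and the $q^{-1/2}$ decay for the genuinely small terms), and a comparison test then yields the equivalence. The main obstacle is precisely this bookkeeping of which primes contribute genuinely small $c_U$-values versus those where $L_\p\geq 1$ controls everything.
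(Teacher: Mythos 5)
Your core argument --- divide the Lang--Weil estimate of Proposition~\ref{pro:lang-weil} by $q^{d_U}$ to get $\vert c_U(\o/\p)q^{-d_U}-L_\p\vert\le Cq^{-1/2}$ with $L_\p:=\sum_{b\in I_U}l_\p(F_{U,b})$, and then compare the two series by the triangle inequality --- is exactly what the paper intends: the corollary is stated with no proof at all, as an immediate consequence of the proposition. More importantly, the obstruction you isolate is genuine and not an artifact of your bookkeeping. One direction is unconditional: if $\sum_\p c_U(\o/\p)q^{-d_U}\vert r_\p\vert<\infty$, then for all but finitely many $\p$ with $L_\p\ge 1$ one has $Cq^{-1/2}\le \tfrac12\le L_\p/2$, hence $L_\p\le 2\,c_U(\o/\p)q^{-d_U}$, while primes with $L_\p=0$ contribute nothing to $\sum_\p L_\p\vert r_\p\vert$; so that series converges. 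The converse is the problematic direction, exactly as you say: on primes where $\overline{E_U}$ has no absolutely irreducible component of dimension $d_U$ (e.g.\ when Frobenius permutes conjugate geometric components) one has $L_\p=0$ while $c_U(\o/\p)$ may still be positive --- think of two conjugate lines meeting in a rational point --- and for a completely arbitrary rational sequence $(r_\p)$ nothing stops $\sum_\p c_U(\o/\p)q^{-d_U}\vert r_\p\vert$ from diverging over that (typically positive-density) set of primes. So the corollary, read literally for arbitrary $(r_\p)$, is not a formal consequence of the Lang--Weil estimate alone; the same informality is already present in the source \cite[Corollary~4.10]{duSG/00}.

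Your proposed repair is also the right one and matches the only way the corollary is used. In Section~4.1 it is applied with $r_\p=r_\p(s)\ge 0$ of size comparable to $q^{-(\sum_{j\in M_i}A_j)\rea(s)-\sum_{j\in M_i}B_j}$, so the error series $C\sum_\p q^{-1/2}r_\p(s)$ converges on a strictly larger half-plane than $\sum_\p L_\p r_\p(s)$ does (its abscissa drops by $1/(2\sum_{j\in M_i}A_j)$), and your two-sided inequalities then give the equivalence of abscissae, which is all that is needed to compute $\alpha_i$. What I would ask you to change is the presentation, which currently trails off in the discussion of $S$ and $S^c$ without landing: state cleanly (i) the unconditional direction above, (ii) the converse under the explicit hypothesis $\sum_\p q^{-1/2}\vert r_\p\vert<\infty$ (or $r_\p\ge 0$ with the stated decay), and (iii) a remark that this hypothesis holds in the paper's application, so that the corollary may be invoked there as stated.
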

Similar to \cite[Lemma 4.11 and Corollary 4.14]{duSG/00} one uses
Corollary~\ref{cor:lang-weil} to show that, given 
$i\in W'$, the abscissa of convergence $\alpha_{i}$ of
$\sum_{\p\notin Q}Z_{i,\p}(s)$ -- or, equivalently, of
$\prod_{\p\not\in Q} (1 + Z_{i,\p}(s))$ -- is
 $$\a_{i}=\max\left\{ \frac{1-\sum_{k\in M_i}B_k}{\sum_{k\in M_i}A_k},
-\frac{B_j}{A_j}\suchthat j\in M_i, A_j \neq 0 \right\}\in\bb{Q}.$$
Recall that $\sum_{k\in M_i}A_k\neq 0$ for $i\in W'$;
cf.\ Remark~\ref{rem:A.j}.  If $i\in [z]$, then $M_i
=\{i\}$. Thus $$\a_{i}=\frac{1-B_i}{A_i}\textrm{ for }i\in[z]\cap W'.$$
As $\alpha(\G(\O_\p))\in{\bf P}$ for all $\p\in\Spec(\O)$, it follows
that
 \begin{multline*}
  \alpha(\G(\O)) = \max\{\{\alpha_{i} \mid i\in W'\}\cup
\{\alpha(\G(\O_\p)) \mid \p\in Q \} \} =\max\{\alpha_{i} \mid
i\in W'\} \in\bb{Q}.
\end{multline*} Similar to {\cite[Corollary
    4.14, Lemma 4.15]{duSG/00}} we obtain that
 \begin{equation*}
 \aO 
  =  \max\left\{\frac{1-B_i}{A_i}
 \suchthat  i\in [z]\cap W'
 \right\},
 \end{equation*}
 and $\alpha_{i}<\alpha(\G(\O))$ if~$i>z$. Note that $\aO>\max{\bf
   P}$. Evidently, $a(\G) := \alpha(\G(\O))$ is independent
 of~$\O$. This concludes the proof of~(1).
 \subsection{Proofs of (2) and the last part}
 In order to prove part (2) of Theorem~\ref{thmA}, it suffices to
 prove the analogous statement for the Euler product
 \eqref{equ:euler.not.Q} whose abscissa of convergence is $a(\G)$, as
 we have shown in Section~\ref{subsec:proof.of.(1)}. Indeed, the
 finitely many remaining Euler factors of $\zeta_{\G(\O)}(s)$ all have
 abscissa of convergence strictly smaller than $a(\G)$ and, as nonzero
 generating functions in $q^{-s}$ with nonnegative coefficients, do
 not vanish in a neightbourhood of~$s=a(\G)$. To establish meromorphic
 continuation of \eqref{equ:euler.not.Q} to
 $\{s\in\bb{C}\mid\rea(s)>a(\G)-\delta\}$ for some $\delta>0$, only
 depending on $\bfG$, we proceed as in {\cite[Theorem
   4.16]{duSG/00}}. For $\p\notin Q$, set
 \begin{align}
   \mathcal{R} & = \left\{ i\in [z] \cap W' \suchthat
   \frac{1-B_i}{A_i}=a(\G) \right\}, \label{Omega.tau}\\ V_\p(s) & =
   \prod_{i\in \mathcal{R}}\left(1-c_i(\o/\p)q^{-d_{U_i}}q^{-(A_i
     s+B_i)}\right).\nonumber
 \end{align}
Informally speaking, the set $\mathcal{R}$ records which of the rays $R_i$,
$i\in[z]$, contribute to the global abscissa of convergence. The
inverses of the functions $V_\p$ will serve as ``approximations'' of
the Euler factors $\zeta_{\G(\O_\p)}(s)$ for $\p\not\in Q$.

 The proof of (2) will be accomplished once we produce constants
 $\delta_1,\delta_2\in\RR_{>0}$, both depending only on $\G$, such
 that the following hold:
 \begin{itemize}
 \item[(I)] The product $\prod_{\p\notin Q}V_\p(s)$ has abscissa of
   convergence $a(\G)$ and meromorphic continuation to
   $\{s\in\bb{C}\mid\rea(s)>a(\G)-\delta_1\}$. 
 \item[(II)] The Euler product $\prod_{\p\notin Q}\left(
     1+\sum_{i\in W'}Z_{i,\p}(s)\right)
   V_\p(s)$ converges on $\{s\in\bb{C}\mid\rea(s)>a(\G)-\delta_2\}$.
 \end{itemize}
 Given $i\in \mathcal{R}$, set
 $$V_{i}(s):=\prod_{\p\notin
   Q}\left(1-c_i(\o/\p)q^{-d_{U_i}}q^{-(A_i s+B_i)}\right),$$
 so
 $$\prod_{\p\notin Q} V_\p(s)=\prod_{i\in \mathcal{R}}V_{i}(s).$$ In order
 to prove (I), it suffices to show that, for each $i\in \mathcal{R}$, the
 function $V_{i}(s)$ defines a meromorphic function on
 $\{s\in\bb{C}\mid\rea(s)>a(\G)-\delta^{(i)}\}$ for some $\delta^{(i)}>0$
 depending only on~$\bfG$.  Fix $i\in \mathcal{R}$, set $U=U_i$, and let
 $\{F_{U,b}\}_{b\in I_U}$ be as in Section~\ref{subsec:proof.of.(1)}.
 Set $$\widetilde{V}_{i}(s)=\prod_{b\in I_U}\prod_{\p\notin
   Q}\left(1-l_\p(F_{U,b})q^{-(A_i s+B_i)}\right).$$ The following
 result follows from the straightforward generalization of
 \cite[Lemma~4.6]{duSG/00} to arbitrary number fields and the fact
 that the constant $\delta$ in this lemma can be chosen to
 be~$\frac{1}{2}$.

 \begin{proposition}\label{pro:artin} 
  The function $\widetilde{V}_{i}(s)$ converges on $\left\{ s\in \CC
  \mid \rea(s) > \frac{1-B_i}{A_{i}}\right\}$ and can be continued to
  a meromorphic function on $\left\{s\in\bb{C}\mid\rea(s)> \frac{1 -
    2B_i}{2A_i}\right\}$. 
 \end{proposition}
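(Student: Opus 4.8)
The plan is to reduce Proposition~\ref{pro:artin} to a single Euler product of the form $\prod_{\p}(1-l_\p(F)\,q^{-(As+B)})$ for one geometrically irreducible $\O_K$-variety $F$ of dimension $d_U$, and then to identify this with (a finite ratio of translates of) an Artin $L$-function attached to the Galois action on the geometric irreducible components of $F$. More precisely, let $\widetilde{F}$ be the normalization of the projective closure (or any convenient smooth compactification) of $F$; the geometrically irreducible components of the reduction $\overline{F}_\p$ are permuted by the decomposition group at $\p$, and $l_\p(F)$ counts the number of such components fixed by Frobenius. By standard facts (see \cite[Lemma~4.6]{duSG/00} and the references therein, or the arguments of du Sautoy--Grunewald carried over verbatim to the number-field setting), for all but finitely many $\p$ one has $l_\p(F)=|\{\text{Frobenius-fixed geometric components}\}|$, and this is the value at the Frobenius conjugacy class of the permutation character $\chi_F$ of a finite Galois extension $E/K$ through which the Galois action on the geometric components factors.

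First I would fix $i\in\mathcal{R}$, write $U=U_i$, $A=A_i$, $B=B_i$, and observe that each factor $\prod_{\p\notin Q}(1-l_\p(F_{U,b})\,q^{-(As+B)})^{-1}$ (the inverse is the object that matches an $L$-function) can be compared, up to a Dirichlet series absolutely convergent in a half-plane strictly to the left of $\rea(s)=\frac{1-B}{A}$, with the Artin $L$-function $L(As+B,\chi_{F_{U,b}},E/K)$ of the permutation representation $\chi_{F_{U,b}}$. Here the substitution $s\mapsto As+B$ is linear, and since the number of geometric components is at least one, $\chi_{F_{U,b}}$ contains the trivial character, so $L(As+B,\chi_{F_{U,b}})$ has a simple pole exactly where $\zeta_K(As+B)$ does, namely at $As+B=1$, i.e.\ at $s=\frac{1-B}{A}$. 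The trivial-character part contributes $\zeta_K(As+B)$, which has meromorphic continuation to all of $\CC$; the remaining part $L(As+B,\chi_{F_{U,b}}-\mathbf{1})$ is, by Brauer's theorem, a ratio of Artin $L$-functions of one-dimensional characters of subgroups, hence meromorphic on all of $\CC$, and in particular holomorphic and nonzero on a neighbourhood of $\rea(As+B)=1$ except possibly on the line $\rea(As+B)=1$ itself. The point is that the Euler product defining the non-trivial part converges for $\rea(As+B)>1$ and the continuation of the $L$-function is holomorphic for $\rea(As+B)>1/2$ away from $As+B=1$; pulling this back through $s\mapsto As+B$ gives convergence for $\rea(s)>\frac{1-B}{A}$ and meromorphic continuation to $\rea(s)>\frac{1-2B}{2A}$, which is the asserted domain (the constant $\delta=\tfrac12$ of \cite[Lemma~4.6]{duSG/00} becomes $\delta^{(i)}=\tfrac{1}{2A_i}$ after the substitution). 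Taking the finite product over $b\in I_U$ preserves these properties, so $\widetilde V_i(s)$ has the claimed continuation.

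The main technical step, and the one I expect to be the principal obstacle, is the careful comparison between the naive Euler product $\prod_\p(1-l_\p(F)q^{-(As+B)})$ and the Artin $L$-function: one must control the Euler factors at the finitely many ramified or bad-reduction primes (which, as the remark after Proposition~\ref{pro:lang-weil} notes, lie in $Q$ and are harmless), verify that the local factor at a good prime $\p$ genuinely equals $1-\chi_F(\mathrm{Frob}_\p)q^{-(As+B)}$ up to higher-order corrections $O(q^{-(As+B)-1/2})$ coming from lower-dimensional strata and from geometric components that are not absolutely irreducible — this is precisely where the Lang--Weil estimate (Proposition~\ref{pro:lang-weil}) is invoked — and finally to check that these corrections assemble into a Dirichlet series absolutely convergent for $\rea(As+B)>1/2$, hence do not affect the meromorphic continuation in the stated half-plane. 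All of this is a routine but careful adaptation of \cite[Lemma~4.6]{duSG/00} from $\QQ$ to an arbitrary number field $K$; the only genuinely new bookkeeping is that the base field is now $K$ rather than $\QQ$, so that Dedekind and Artin $L$-functions of $K$ (rather than of $\QQ$) appear, but their analytic properties — meromorphic continuation to $\CC$, location of poles, the half-plane of absolute convergence and the Brauer decomposition — are identical, and in particular the uniform constant $\tfrac12$ survives unchanged because it reflects only the Riemann-hypothesis-free zero-free region coming from $|L(As+B,\chi)|$ being bounded away from $0$ and $\infty$ for $\rea(As+B)>1$ together with the trivial bound $l_\p(F)\le \deg F$.
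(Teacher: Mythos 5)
This is essentially the paper's own route: the proof given there is a one-line appeal to the number-field generalization of \cite[Lemma~4.6]{duSG/00} with $\delta=\tfrac{1}{2}$, and your comparison of $\prod_{\p}\bigl(1-l_\p(F_{U,b})q^{-(A_is+B_i)}\bigr)$ with the Artin $L$-function of the permutation character on the geometric components of $F_{U,b}$ is precisely the content of that lemma. Two small caveats that do not affect the conclusion: the relevant $L$-functions should be taken over $L$ rather than $K$ (the product runs over primes of $\O_L$, with Frobenius acting through $\mathrm{Gal}(\bar{L}/L)$ on the geometric components of the $K$-variety $F_{U,b}$), and your assertion that the continued $L$-function is holomorphic and nonzero for $\rea(A_is+B_i)>1/2$ away from the pole is far stronger than what is known (it amounts to Artin's conjecture plus a zero-free strip) --- but it is also unnecessary, since Brauer's theorem gives meromorphy on all of $\CC$ and, combined with the convergence of the correction factor for $\rea(A_is+B_i)>1/2$, that is all the proposition requires, and you establish both.
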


 Note that $\frac{1-2B_i}{2A_i} < \frac{1-B_i}{A_i}$ as
 $A_i>0$. The following is a variation of
 notation introduced in the proof of {\cite[Theorem~4.16]{duSG/00}}.

 \begin{definition}
   Let $(F_\p(s))_{\p\not\in Q}$ and $(G_\p(s))_{\p\not\in Q}$ be
   families of functions in a complex variable $s$.  Given
   $\Delta\in\RR_{>0}$, we say that the respective Euler products are
   $\Delta$-equivalent, written $$\prod_{\p\notin Q}
   F_\p(s)\equiv_\Delta\prod_{\p\notin Q} G_\p(s),$$ if
   $\sum_{\p\notin Q}(F_\p(s)-G_\p(s))$ converges on
   $\{s\in\bb{C}\mid\rea(s)>\Delta\}$.
 \end{definition}

The Lang-Weil estimate (Proposition~\ref{pro:lang-weil}) yields that
there exist uniform (i.e.\ independent of $\p$) constants
$\Delta^{(i)}\in\QQ_{>0}$ with $\Delta^{(i)} < \alpha_{i}(=
(1-B_i)/A_i)$ such that $V_{i}(s)
\equiv_{\Delta^{(i)}}\widetilde{V}_{i}(s)$.  Set
 $$\delta_1 = a(\G)-\max\left\{\frac{1- 2B_i}{2A_i},
\Delta^{(i)}\suchthat i\in \mathcal{R}\right\}>0.$$ It follows that
$\prod_{\p\notin Q}V_\p(s)$ is a meromorphic function on
$\{s\in\bb{C}\mid\rea(s)>a(\G)-\delta_1\}$, which proves~(I).

Set $$d_1 =
\max\{0,\a_{i}\mid i\in([z]\cap W')\setminus \mathcal{R}\} < a(\G).$$
Then, by the definition~\eqref{Omega.tau} of $\mathcal{R}$,
 $$\prod_{\p\notin Q}\left( 1+\sum_{i\in
   W'}Z_{i,\p}(s)\right)\equiv_{d_1}\prod_{\p\notin Q}\left(
 1+\sum_{i\in \mathcal{R}}Z_{i,\p}(s)\right).$$ Recall that if
 $i\in\mathcal{R}$, then $M_i=\{i\}$ and thus
 $$Z_{i,\p}(s)=\frac{(1-q^{-1})^d}{\prod_{i=1}^{d-1}(1-q^{-i})} c_i(\o/\p)q^{-\binom{d}{2}}(q-1)^{|U_i|}
 \frac{q^{-(A_i s+B_i)}}{1-q^{-(A_i s+B_i)}}.$$ For $\p\notin Q$, and
 $i\in \mathcal{R}$, set
 $$\overline{Z_{i,\p}(s)}=c_i(\o/\p)q^{-d_{U_i}}q^{ -(A_is+B_i)}.$$
There exists a uniform constant $d_2\in\QQ_{>0}$ with $d_2 < a(\G)$
such that
$$\prod_{\p\notin Q}\left(
1+\sum_{i\in
  \mathcal{R}}Z_{i,\p}(s)\right)\equiv_{d_2}
\prod_{\p\notin Q}\left( 1+\sum_{i\in
  \mathcal{R}}\overline{Z_{i,\p}(s)}\right).$$ Set
 $$d_3= \max\left\{d_1,d_2, \frac{1-\sum_{k\in J}B_k}{\sum_{k\in
    J}A_k}, \frac{1-2B_j}{2A_j} \suchthat j\in \mathcal{R}, J\subseteq
\mathcal{R}, \vert J \vert \geq 2 \right\} < a(\G).$$
Then
$$\prod_{\p \not\in Q}V_\p(s) \equiv_{d_3} \prod_{\p\not\in Q}\left(1-\sum_{i\in\mathcal{R}}\overline{Z_{i,\p}(s)}\right),$$
and consequently
 \begin{align*}
 \prod_{\p\notin Q}\left( 1+\sum_{i\in W'}Z_{i,\p}(s)\right) V_\p(s)
 & \equiv_{d_3} \prod_{\p\notin Q}\left( 1+\sum_{i\in
   \mathcal{R}}Z_{i,\p}(s)\right) V_\p(s)\\& \equiv_{d_3} \prod_{\p\notin
   Q}\left( 1+\sum_{i\in \mathcal{R}}\overline{Z_{i,\p}(s)}\right) \left(
 1-\sum_{i\in \mathcal{R}}\overline{Z_{i,\p}(s)}\right).
 \end{align*}
The product on the right hand side converges on
$\{s\in\bb{C}\mid\rea(s)> d_3\}$. Hence (II) follows.

It is clear that $\delta(\G) := \max\{a(\G)-\delta_1,d_3 \}$ is
independent of the number field $K$ and the zeta function
$\zeta_{\G(\O)}(s)$ may be continued
to~$\{s\in\bb{C}\mid\rea(s)>a(\G)-\delta(\G) \}$. The order
$\beta(\G)$ of the pole of $\prod_{\p\not\in Q} V_\p^{-1}(s)$ at
$s=a(\G)$ is clearly an invariant of~$\G$, which proves~$(2)$.  The
statement about the holomorphy of the continued function follows from
the fact that the meromorphic continuation was achieved by writing
$\zeta_{\G(\O)}(s)$ as a product of translates of Artin $L$-functions
(implicit in the proof of Proposition~\ref{pro:artin}) and a Dirichlet
series convergent on $\{s\in \CC \suchthat \rea(s) > a(\G) -
\delta(\G)\}$; cf.\ proof of \cite[Corollary~4.22]{duSG/00}. The last
part of Theorem~\ref{thmA} follows from the Tauberian theorem
{\cite[Theorem 4.20]{duSG/00}}.  This completes the proof of Theorem
\ref{thmA}.

\section{Proof of Corollary~\ref{cor}}\label{sec:proof.thmB}

Let $G$ be a finitely generated nilpotent group of nilpotency class
$c$ and Hirsch length~$h$, say, and $G_0 \leq G$ a torsion-free
subgroup of finite index in $G$. By the general theory of the Mal'cev
correspondence between $\T$-groups and torsion-free nilpotent Lie
rings, there exists an $h$-dimensional $\QQ$-Lie algebra
$L_{G_0}(\QQ)$ associated to ${G_0}$, together with an injective map
$\log : {G_0}\to L_{G_0}(\QQ)$ such that $\log({G_0})$ rationally
spans $L_{G_0}(\QQ)$; cf.\ {\cite[Chapter 6]{Segal-polycyclic}}.  If
$H$ is a subgroup of finite index in~${G_0}$, then
$L_H(\QQ)=L_{G_0}(\QQ)$.  In general, $\log({G_0})$ is not even an
additive subgroup of~$L_{G_0}(\QQ)$.  There exists, however, a
subgroup $H$ of finite index in ${G_0}$, and hence in $G$, such that
$\log(H)$ is a $\ZZ$-Lie lattice inside $L_{G_0}(\QQ)$ satisfying
$\log(H)'\subseteq c!\log(H)$; cf.~{\cite[Theorem~4.1]{GSS88}}. Hence
$H$ may be viewed as the group $\G(\ZZ)$ of $\ZZ$-points of the group
scheme $\G$ associated to the $\ZZ$-Lie lattice $\log(H)$.  Since $H$
is of finite index in~$G$, it follows from
{\cite[Corollary~4.14]{Snocken}} that $\a(G)=\a(H)$.  Moreover,
$\zeta_{G,p}(s)=\zeta_{H,p}(s)$ for all primes $p$ which do not divide
the index~$|G:H|$.  The Euler products $\zeta_{G}(s) = \prod_{p
  \textrm{ prime}}\zeta_{G,p}(s)$ and $\zeta_{H}(s) = \prod_{p
  \textrm{ prime}}\zeta_{H,p}(s)$ therefore coincide apart from
finitely many Euler factors. Notice that, for every prime~$p$, the
abscissae of convergence of $\zeta_{G,p}(s)$ and $\zeta_{H,p}(s)$
concide (cf.~Lemma~\ref{abscissa commensurable}) and are strictly
smaller than the abscissa of convergence~$\alpha(H)$. All Euler
factors are nonzero rational generating functions in $p^{-s}$ (cf.\
\cite[Theorem~1.5]{Hrushovski-Martin}) with nonnegative
coefficients. It follows that they do not vanish in a neighbourhood of
$s=\alpha(G)$. Corollary~\ref{cor} follows now by applying
Theorem~\ref{thmA} to the $\ZZ$-Lie lattice~$\log(H)$.

\bibliographystyle{abbrv}
\def\cprime{$'$} \def\cprime{$'$} \def\cprime{$'$}

\end{document}